\definecolor{lightblue}{rgb}{.85,.93,1}
\newcommand{\cdotroomy}{\,\cdot\,}
\newcommand{\dmu}{\,\mathrm{d}\mu}
\newcommand{\angles}[1]{\langle #1 \rangle}
\newcommand{\br}{\mathbb{R}}
\newcommand{\cd}{\mathcal{D}}
\newcommand{\cf}{\mathcal{F}}
\newcommand{\cs}{\mathcal{S}}
\newcommand{\ct}{\mathcal{T}}
\newcommand{\be}{\begin{equation*}}
\newcommand{\ee}{\end{equation*}}
\newcommand{\ba}{\begin{eqnarray*}}
\newcommand{\ea}{\end{eqnarray*}}
\DeclareMathOperator{\supp}{supp}
\newcommand{\Q}{|Q|}
\newcommand{\qhat}{\hat{Q}}
\newcommand{\that}{\hat{T}}
\newcommand{\fhat}{\hat{F}}
\newcommand{\set}[1]{\{ #1 \}}
\newcommand{\abs}[1]{\lvert#1\rvert}
\theoremstyle{plain}
\newtheorem{theorem}{Theorem}[section]
\newtheorem{proposition}[theorem]{Proposition}
\theoremstyle{definition}
\newtheorem*{convention}{Convention}
\newtheorem{lemma}[theorem]{Lemma}
\theoremstyle{definition}
\theoremstyle{remark}
\newtheorem*{remark}{Remark}
\numberwithin{equation}{section}
\newcommand{\norm}[1]{\lVert#1\rVert}
\newcommand{\ch}{\textup{ch}}
\newcommand{\chf}{\textup{ch}_\mathcal{F}}
\newcommand{\ef}{E_{\mathcal{F}}}
\newcommand{\es}{E_{\mathcal{S}}}
\begin{document}

\date{\today}
\subjclass[2010]{42B20}
\title[On pointwise domination and median oscillation decomposition]{Remark on dyadic pointwise domination and  median oscillation decomposition}

\author{Timo S. H\"anninen}
\address{Department of Mathematics and Statistics, University of Helsinki, P.O. Box 68, FI-00014 HELSINKI, FINLAND}
\email{timo.s.hanninen@helsinki.fi}

\thanks{The author is supported by the European Union through T. Hyt\"onen's ERC Starting Grant \lq Analytic-probabilistic methods for borderline singular integrals\rq.}

\begin{abstract}In this note, we do the following: 

a) By using Lacey's recent technique, we give an alternative proof for Conde-Alonso and Rey's domination theorem, which states that each positive dyadic operator of arbitrary complexity is pointwise dominated by a positive dyadic operator of zero complexity:
\[
\sum_{S\in\mathcal{S}} \langle f \rangle^\mu_{S^{(k)}} 1_S\lesssim (k+1) \sum_{S'\in\mathcal{S}'} \langle f \rangle^\mu_{S'} 1_{S'}.
\]

b) By following the analogue between median and mean oscillation, we extend Lerner's local median oscillation decomposition to arbitrary (possibly non-doubling) measures:
\[\lvert f-m(f,\hat{S_0})\rvert 1_{S_0}\lesssim \sum_{S\in\mathcal{S}} (\omega_\lambda(f;S)+\lvert m(f,S)-m(f,\hat{S}) \rvert )1_S.\]
This can be viewed as a median oscillation decomposition adapted to the dyadic (martingale) BMO. 
As an application of the decomposition, we give an alternative proof for the dyadic (martingale) John--Nirenberg inequality, and for Lacey's domination theorem, which states that each martingale transform is pointwise dominated by a positive dyadic operator of complexity zero.
\end{abstract}

\maketitle
\tableofcontents

\section*{Notation}

\begin{tabular*}{\textwidth}{c l}
\\
$\mu$ & An arbitrary locally finite Borel measure on $\br^d$.\\
$f$ & An arbitrary measurable function $f:\br^d\to \br$.\\
$k$ & An arbitrary non-negative integer. \\
$\angles{f}_Q$ & The average of $f$ on $Q$, $\angles{f}_Q:=\angles{f}^\mu_Q:=\frac{1}{\mu(Q)}\int_Q f \dmu$.\\
$L^p$ & $L^p:=L^p(\mu)$.\\
$\cd$ & The collection of dyadic cubes.\\
$\hat{Q}$ & The dyadic parent of a dyadic cube $Q$\\
$Q^{(k)}$ & The $k$th dyadic ancestor of a dyadic cube $Q$, \\
& defined recursively by $Q^{(k+1)}=\widehat{Q^{(k)}}.$ \\
$\ch_\cd^{(k)}(Q)$& The $k$th dyadic descendants of a dyadic cube $Q$,\\
& defined by $\ch_\cd^{(k)}(Q):=\{Q'\in\cd : Q'^{(k)}=Q\}$.\\
$\chf(F)$ & The {\it $\cf$-children} $\chf(F)$ of a dyadic cube $F$,    \\
& defined by $\chf(F):=\{F'\in\cd: \text{$F'$ maximal such that $F'\subsetneq F$}\}$.\\
$\ef(F)$ & $\ef(F):=F\setminus \bigcup_{F'\in\chf(F)} F'$.\\
$\big(g(x)\big)_{x\in Q}$ & The notation for the constant value of a function $g$ on $Q$.  \\
& It is implicitly understood that the  \\
& function $g:\br^d\to \br$ is constant  on $Q$. \\
$m(f,Q)$ & Any median of $f$ on $Q$, defined in Subsection \ref{sec_definitions}.\\
$r_\lambda(f,Q)$ & The relative median oscillation of $f$ (about zero) on $Q$, \\
& defined in Subsection \ref{sec_definitions}.\\
& It is implicitly understood that $\lambda\in (0,1/2)$.\\
$\omega_\lambda(f,Q)$ & The median oscillation of $f$ on $Q$, defined in Subsection \ref{sec_definitions}. \\
&It is implicitly understood that $\lambda\in (0,1/2)$.\\ 
&\\
\multicolumn{2}{ l }{$\bullet $ A collection $\cf\subseteq \cd$ is {\it sparse} if there exists $\gamma \in(0,1)$    }\\
\multicolumn{2}{ l }{such that $\sum_{F'\in\chf(F)} \mu(F')\leq \gamma \mu(F)$ for every $F\in\cf$.}\\
\end{tabular*}
\section{Introduction}
In this note, by adapting Lacey's recent technique \cite{lacey2015}, we give an alternative proof for Conde-Alonso and Rey's domination theorem \cite{condealonso2014}. Furthermore, we extend Lerner's local median oscillation decomposition \cite{lerner2010,lerner2013}  to arbitrary (possibly non-doubling) measures.

First, we consider the domination theorem. Conde-Alonso and Rey proved that:
\begin{theorem}[Pointwise domination theorem for positive dyadic operators, Theorem A in  \cite{condealonso2014}]\label{thm_domination} Let $\cs$ be a sparse collection that contains a maximal cube. Then there exists a sparse collection $\ct$ such that
$$
\sum_{S\in\cs} \angles{f}_{S^{(k)}} 1_S\lesssim (k+1) \sum_{T\in\ct} \angles{f}_{T} 1_T$$
$\mu$-almost everywhere.
The collection $\ct$ depends on the measure $\mu$, the collection $\cs$, the integer $k$, and the function $f$.
\end{theorem}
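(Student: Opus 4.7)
The plan is to adapt Lacey's stopping-time technique \cite{lacey2015} to the positive dyadic sum $A^k f := \sum_{S \in \cs} \angles{f}_{S^{(k)}} 1_S$, carving out $\ct$ recursively starting from the maximal cube $S_0 \in \cs$ (whose existence is exactly what the hypothesis buys us). I start with $\ct^{(0)} := \{S_0\}$ and, given $T \in \ct^{(n)}$, introduce the localised operator $A^k_T f(x) := \sum_{S \in \cs,\, S \subseteq T} \angles{f}_{S^{(k)}} 1_S(x)$ and the exceptional set $U_T := \bigl\{x \in T : A^k_T f(x) > C_0 (k+1) \angles{f}_T\bigr\}$, for a constant $C_0$ to be fixed. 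The $\ct$-children $\ch_\ct(T)$ are declared to be the maximal dyadic cubes $T' \subsetneq T$ satisfying either $\angles{f}_{T'} > 2 \angles{f}_T$ (``average stopping'') or $\mu(T' \cap U_T) > \tfrac{1}{2}\mu(T')$ (``Lacey stopping''). I then set $\ct := \bigcup_n \ct^{(n)}$.

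Sparseness of $\ct$ follows from a Chebyshev bound on the average-stopping contribution (at most $\mu(T)/2$) combined with a Lebesgue-differentiation/dyadic-covering argument: the covering argument gives $U_T \subseteq \bigcup_{T' \in \ch_\ct(T)} T'$ $\mu$-a.e., while the weak-type bound below controls $\mu(U_T)$ by $\mu(T)/C_0$, so taking $C_0$ large makes the total measure of $\ch_\ct(T)$ at most $\tfrac{3}{4}\mu(T)$. For the pointwise domination, note first that $U_T \subseteq \bigcup_{T' \in \ch_\ct(T)} T'$ $\mu$-a.e.\ forces $A^k_T f(x) \leq C_0 (k+1) \angles{f}_T$ on $E_T := T \setminus \bigcup T'$. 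For each $T'' \in \ct$ and $T' \in \ch_\ct(T'')$, the dyadic parent $T'^{(1)}$ is by maximality not a stopping cube, so $\mu(T'^{(1)} \setminus U_{T''}) > 0$; choosing any $y$ in this set and using the dyadic identity $\{S \in \cs : T' \subsetneq S \subseteq T''\} = \{S \in \cs : T'^{(1)} \subseteq S \subseteq T''\}$, one obtains
\[ \sum_{S \in \cs,\, T' \subsetneq S \subseteq T''} \angles{f}_{S^{(k)}} \;\leq\; A^k_{T''} f(y) \;\leq\; C_0 (k+1) \angles{f}_{T''}. \]
Summing these contributions along the nested $\ct$-chain of cubes containing an arbitrary point $x \in S_0$ produces $\sum_{S \in \cs} \angles{f}_{S^{(k)}} 1_S(x) \leq C_0 (k+1) \sum_{T \in \ct} \angles{f}_T 1_T(x)$, as required.

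The main obstacle is the key weak-type estimate
\[ \mu\bigl(\{x \in T : A^k_T f(x) > \lambda\}\bigr) \;\lesssim\; \frac{k+1}{\lambda}\int_T f \dmu, \]
which both controls $\mu(U_T)$ and supplies the $(k+1)$ loss. I would prove it either by a direct dyadic Calderón--Zygmund decomposition exploiting the sparseness of $\cs$, or, more transparently, by partitioning $\cs$ into $k+1$ sub-collections according to dyadic generation modulo $k+1$ and writing $A^k_T$ as a sum of $k+1$ complexity-zero operators, each pointwise dominated by the standard dyadic maximal function (which is weak-$(1,1)$ with an absolute constant). The linear dependence on $k+1$ emerging here is precisely what then propagates into the final pointwise domination inequality.
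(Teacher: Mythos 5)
Your overall skeleton — iterate from the maximal cube, use an exceptional set and Lacey-style stopping, sum along the stopping chain — is the right shape, and the pointwise bookkeeping (choosing a point $y \in T'^{(1)}\setminus U_{T''}$, using the dyadic identity $\{S : T'\subsetneq S\subseteq T''\}=\{S : T'^{(1)}\subseteq S\subseteq T''\}$) is sound. But there is a genuine gap in the way you localize, and it is not a cosmetic one.

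You define $A^k_T f = \sum_{S\in\cs,\,S\subseteq T}\angles{f}_{S^{(k)}}1_S$ and then claim the weak-type estimate $\mu(\{x\in T : A^k_T f(x)>\lambda\})\lesssim \frac{k+1}{\lambda}\int_T f\,\mathrm{d}\mu$. This is false: for a shallow $S$ (within $k$ generations of $T$) the ancestor $S^{(k)}$ sticks \emph{out} of $T$, up to $T^{(k)}$, so $A^k_T f$ depends on $f$ on $T^{(k)}\setminus T$. Take $f$ supported on $T^{(k)}\setminus T$; then $\int_T f = 0$ and $\angles{f}_T=0$, yet $A^k_T f > 0$ on $T$, so $U_T = T$ and the sparseness of $\ch_\ct(T)$ collapses. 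The same problem recurs at every level of the recursion, not just the top. This is exactly the difficulty that the paper's proof is built to avoid: it rewrites $A_k f = \sum_Q \angles{f}_Q\eta_Q$ with $\eta_Q := \sum_{S : S^{(k)}=Q} 1_S \le 1_Q$, localizes by the \emph{ancestors} ($Q\subseteq F$, so that $\angles{f}_Q$ sees only $1_F f$), and uses the stopping condition on the truncated sum $\sum_{F'^{(k)}\subseteq Q\subseteq F}\angles{f}_Q\eta_Q$, which is \emph{constant on $F'$} because $\eta_Q$ is constant on the $k$th-generation descendants of $Q$. The ``gap'' cubes $Q$ with $F'^{(1)}\subseteq Q\subsetneq F'^{(k)}$ are deliberately \emph{excluded} from the weak-type stopping rule and are handled by the separate average stopping; there are at most $k$ of them, and that is precisely where the factor $(k+1)$ enters. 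Your construction keeps the gap terms inside $U_T$, which is why the localized weak-$(1,1)$ estimate fails. (This is also why, in your scheme, if the weak-type constant really were $k$-independent you would appear to remove the $(k+1)$ altogether — a red flag that something structural has gone wrong.)

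Two smaller points. First, the paper's weak-$(1,1)$ bound for $A_k$ is in fact $\norm{A_k}_{L^1\to L^{1,\infty}}\lesssim 1$, independent of $k$ (the $k$-dependence lives in the adjoint), so there is no need to put $(k+1)$ into the weak-type estimate; the $(k+1)$ in the theorem must come from elsewhere (namely the gap terms). Second, your ``more transparent'' justification of the weak-type bound — split $\cs$ by generation modulo $k+1$ and dominate each piece by the dyadic maximal function — does not work: within one congruence class the nested chain $S_1\supsetneq S_2\supsetneq\cdots$ through a point still contributes $\sum_i\angles{f}_{S_i^{(k)}}$, which is a sum, not a supremum, and is not controlled by $Mf$. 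The correct route is the Calder\'on--Zygmund decomposition as in Proposition~\ref{prop_weaklone} (with the extra term $\beta$ for non-doubling $\mu$).
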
 
\begin{remark}This result improves on Lerner's domination result \cite[Proof of Theorem 1.1]{lerner2013}, which states the domination in any Banach function space norm (in particular, in the $L^p$ norm). 
\end{remark}
In Section \ref{sec_pointwisedomination}, we give an alternative proof for Theorem \ref{thm_domination} by adapting Lacey's recent technique \cite[Proof of Theorem 2.4]{lacey2015}.

Then, we consider the local median oscillation decomposition. Lerner proved that:
\begin{theorem}[Median oscillation decomposition, Theorem 1.1 in \cite{lerner2010} and Theorem 4.5 in \cite{lerner2013}]\label{thm_lerner}Let $\mu$ be a locally finite Borel measure. Assume that $\mu$ is doubling. Let $F_0$ be an initial cube. Then, there exists a sparse collection $\cf$ of dyadic subcubes of $F_0$ such that
$$
\abs{f-m(f,F_0)}1_{F_0}\lesssim \sum_{F\in\cf} \omega_\lambda(f;F) 1_F
$$
$\mu$-almost everywhere. The collection $\cf$ depends on the initial cube $F_0$ and the function $f$, and the parameter $\lambda$ depends on the doubling constant.
\end{theorem}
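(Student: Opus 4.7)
The plan is to mimic the classical Calderón–Zygmund stopping-time construction, with medians in place of averages. I would build $\cf$ recursively: set $\cf_0 := \{F_0\}$, and for $F \in \cf$ declare $\chf(F)$ to be the collection of maximal dyadic subcubes $F' \subsetneq F$ on which the median jumps significantly,
\[
|m(f,F') - m(f,F)| > \omega_\lambda(f;F),
\]
with $\cf := \bigcup_n \cf_n$. The remainder of the argument then consists of (i) verifying sparseness of $\cf$, (ii) a good-set estimate on $\ef(F)$, and (iii) a telescoping bound along the $\cf$-chain through a given point.

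For sparseness, fix $F \in \cf$ and $F' \in \chf(F)$ and assume $m(f,F') > m(f,F) + \omega_\lambda(f;F)$ (the opposite sign is symmetric). The defining property of the median gives $\mu(\{f \geq m(f,F')\} \cap F') \geq \mu(F')/2$, and this set is contained in $\Omega_F := \{x \in F : f(x) - m(f,F) > \omega_\lambda(f;F)\}$, whose $\mu$-measure is at most $2\lambda\mu(F)$ by definition of $\omega_\lambda$. Disjointness of the $F' \in \chf(F)$ then yields $\sum_{F' \in \chf(F)} \mu(F') \leq 4\lambda\mu(F)$, so $\cf$ is sparse provided $\lambda$ is chosen small enough.

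For the good-set estimate, take $x \in \ef(F) := F \setminus \bigcup_{F' \in \chf(F)} F'$ and a dyadic $Q$ with $F \supsetneq Q \ni x$. By maximality of $\chf(F)$ the cube $Q$ is not a stopping cube, so $|m(f,Q) - m(f,F)| \leq \omega_\lambda(f;F)$. Letting $Q \downarrow \{x\}$ and invoking the Lebesgue differentiation theorem for medians, which is valid $\mu$-a.e.\ under doubling, one obtains $m(f,Q) \to f(x)$ and hence $|f(x) - m(f,F)| \leq \omega_\lambda(f;F)$ for a.e.\ $x \in \ef(F)$. Because the sparseness forces $F_0 \setminus \bigcup_F \ef(F)$ to be $\mu$-null, for a.e.\ $x$ there is a finite $\cf$-chain $F_0 = G_0 \supsetneq G_1 \supsetneq \cdots \supsetneq G_N \ni x$ with $x \in \ef(G_N)$, and telescoping gives
\[
|f(x) - m(f,F_0)| \leq \omega_\lambda(f;G_N) + \sum_{k=0}^{N-1} |m(f,G_{k+1}) - m(f,G_k)|.
\]

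The main obstacle, and the second and decisive use of doubling, is upper-bounding each jump $|m(f,G_{k+1}) - m(f,G_k)|$ by a multiple of $\omega_\lambda(f;G_k)$. If the dyadic parent $\hat{G_{k+1}}$ lies strictly between $G_{k+1}$ and $G_k$, then maximality of $\chf(G_k)$ immediately gives $|m(f,\hat{G_{k+1}}) - m(f,G_k)| \leq \omega_\lambda(f;G_k)$, leaving only $|m(f,G_{k+1}) - m(f,\hat{G_{k+1}})|$ to control. A quasi-median argument shows that whenever $\mu(F')/\mu(F) \geq c > 2\lambda$, the median $m(f,F')$ lies within $2\omega_\lambda(f;F)$ of $m(f,F)$; doubling supplies precisely such a ratio for dyadic children, forcing $\lambda$ to be small in terms of the doubling constant. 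It is exactly this step that genuinely breaks down in the non-doubling setting and motivates the additional $|m(f,S) - m(f,\hat{S})|$ term in the generalization announced in the abstract.
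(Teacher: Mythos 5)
The paper does not actually prove Theorem~\ref{thm_lerner}; it cites Lerner and Hyt\"onen for it and only supplies a proof of the generalization, Theorem~\ref{thm_decompositiongeneralmeasures}. So there is no ``paper's own proof'' to match you against, but your stopping-time construction is the classical route, and its structure is sound. There is, however, a genuine gap in the sparseness step: you assert that $\Omega_F := \{x \in F : f(x) - m(f,F) > \omega_\lambda(f;F)\}$ has $\mu$-measure at most $2\lambda\mu(F)$ ``by definition of $\omega_\lambda$,'' but the definition of $\omega_\lambda$ involves an infimum over centers $c$, not the median $m(f,F)$, so it does not bound the measure of a level set of $f - m(f,F)$. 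What is true is that $r_\lambda(f - m(f,F);F) \leq 2\omega_\lambda(f;F)$ by the quasi-minimizer property (Lemma~\ref{lemma_quasiminimizer}), which gives $\mu\big(F \cap \{|f - m(f,F)| > 2\omega_\lambda(f;F)\}\big) \leq \lambda\mu(F)$. You should therefore either raise your stopping threshold to $2\omega_\lambda(f;F)$ (or use $r_\lambda(f - m(f,F);F)$ directly), and invoke Lemma~\ref{lemma_quasiminimizer} explicitly; with that fix the sparseness and the good-set bound both go through with harmless changes of constants, and the rest of your argument (Fujii differentiation, telescoping, and the doubling-dependent control of the parent--child median jump via Lemma~\ref{lemma_3rproperty}) is correct.

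It is instructive to compare your route to the paper's proof of Theorem~\ref{thm_decompositiongeneralmeasures}. You stop when $m(f,F')$ moves far from $m(f,F)$, measured against $\omega_\lambda(f;F)$; the paper instead stops when $m(f,F')$ moves far from $m(f,\hat{F})$, measured against $3r_\lambda(f - m(f;\hat{F});F)$. The paper's choice makes the telescoping close on itself: the jump term that survives each iteration is $|m(f,\hat{F'}) - m(f,\hat{F})|$, and since $\hat{F'}$ is a non-stopping cube by maximality, it is controlled directly by the stopping threshold, with no need to compare $m(f,F')$ to $m(f,\hat{F'})$. In your decomposition the jump $|m(f,G_{k+1}) - m(f,G_k)|$ must be split through the parent $\hat{G_{k+1}}$, and it is precisely the piece $|m(f,G_{k+1}) - m(f,\hat{G_{k+1}})|$ that forces you to use doubling. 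You correctly diagnose this as the step that fails for general measures and motivates the extra term $|m(f,S) - m(f,\hat{S})|$ in Theorem~\ref{thm_decompositiongeneralmeasures}; the paper's reference-to-the-parent stopping condition is exactly the device that absorbs this term and dispenses with doubling.
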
 
\begin{remark}The original decomposition by Lerner in \cite[Theorem 1.1]{lerner2010} and \cite[Theorem 4.5]{lerner2013} contains an additional term (a median oscillation maximal function), which was removed by Hyt\"onen in \cite[Theorem 2.3]{hytonen2014}. Furthermore, the localization on an initial cube was removed by Lerner and Nazarov \cite[Theorem 10.2]{lerner2014}.
\end{remark}
In Section \ref{sec_mediandecomposition}, we extend Theorem \ref{thm_decompositiongeneralmeasures} as follows:
\begin{theorem}[Median oscillation decomposition, adapted to the dyadic martingale BMO]\label{thm_decompositiongeneralmeasures}Let $\mu$ be an arbitrary (possibly non-doubling) locally finite Borel measure. Let $F_0$ be an initial cube. Then, there exists a sparse collection $\cf$ of dyadic subcubes of $F_0$ such that
$$
\abs{f-m(f,\hat{F_0})}1_{F_0}\lesssim \sum_{F\in\cf} (\omega_\lambda(f;F)+\abs{m(f,F)-m(f,\hat{F})} )1_F
$$
$\mu$-almost everywhere. The collection $\cf$ depends on the initial cube $F_0$ and the function $f$. 
\end{theorem}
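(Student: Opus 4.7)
The plan is to imitate Lerner's stopping-time/corona construction, using a parent-splitting telescope to absorb the non-doubling defect into the parent-median-difference term $|m(f,F)-m(f,\hat{F})|$ on the right-hand side.

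First I would build $\cf$ recursively by declaring $F_0\in\cf$ and, for each $F\in\cf$, defining
\[
\chf(F):=\{F'\subsetneq F \text{ maximal dyadic}:\ |m(f,F')-m(f,F)|>\omega_\lambda(f;F)\}.
\]
Sparseness of $\cf$ requires no doubling: for $F'\in\chf(F)$ with, say, $m(f,F')>m(f,F)+\omega_\lambda(f;F)$, the defining property of the median $\mu(\{x\in F': f(x)\geq m(f,F')\})\geq \tfrac12\mu(F')$ forces at least half of $F'$ (in $\mu$-measure) into $B^*_F:=\{x\in F:|f(x)-m(f,F)|>\omega_\lambda(f;F)\}$. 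Since $\mu(B^*_F)\leq \lambda\mu(F)$ by the definition of $\omega_\lambda$, summing over disjoint children yields $\sum_{F'\in\chf(F)}\mu(F')\leq 2\lambda\mu(F)$, i.e.\ sparseness with constant $\gamma=2\lambda<1$.

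Iterating sparseness through generations gives $\sum_{F\in\cf_n}\mu(F)\leq \gamma^n\mu(F_0)\to 0$, so $\mu$-a.e.\ $x\in F_0$ lies in only finitely many elements of $\cf$; denote the minimal such cube by $F^x$, so $x\in \ef(F^x)$. For any dyadic $G$ with $x\in G\subseteq F^x$, the condition $x\notin\bigcup\chf(F^x)$ forces $G\notin\chf(F^x)$, and then maximality in the definition of $\chf$ yields $|m(f,G)-m(f,F^x)|\leq\omega_\lambda(f;F^x)$. Letting $G$ shrink to $x$ through the dyadic filtration and invoking the dyadic martingale convergence theorem applied to $1_{\{f>c\}}$ for each rational $c$ (valid for any locally finite Borel $\mu$), I would derive $m(f,G)\to f(x)$ $\mu$-a.e.\ via the level-set characterization of the median, hence
\[
|f(x)-m(f,F^x)|\leq\omega_\lambda(f;F^x)\qquad\text{for $\mu$-a.e.\ }x\in F_0.
\]

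Finally, with $F_0=F^x_0\supsetneq\cdots\supsetneq F^x_n=F^x$ the chain of $\cf$-ancestors of $F^x$, I would telescope
\[
f(x)-m(f,\hat{F_0})=[f(x)-m(f,F^x_n)]+\sum_{j=0}^{n-1}[m(f,F^x_{j+1})-m(f,F^x_j)]+[m(f,F_0)-m(f,\hat{F_0})],
\]
and split each middle term through the parent:
\[
m(f,F^x_{j+1})-m(f,F^x_j)=[m(f,F^x_{j+1})-m(f,\widehat{F^x_{j+1}})]+[m(f,\widehat{F^x_{j+1}})-m(f,F^x_j)].
\]
The second piece is bounded by $\omega_\lambda(f;F^x_j)$: since $F^x_{j+1}$ is maximal in $\chf(F^x_j)$, its parent $\widehat{F^x_{j+1}}\subseteq F^x_j$ must fail the stopping condition (or trivially equals $F^x_j$). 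The first piece is precisely a parent-difference term for $F^x_{j+1}$, and the outer bracket $m(f,F_0)-m(f,\hat{F_0})$ is the parent-difference term for $F_0$. Assembling everything,
\[
|f(x)-m(f,\hat{F_0})|\leq\sum_{j=0}^{n}\big(\omega_\lambda(f;F^x_j)+|m(f,F^x_j)-m(f,\widehat{F^x_j})|\big),
\]
which is the claimed pointwise decomposition since $\{F^x_0,\ldots,F^x_n\}$ are exactly the $\cf$-cubes containing $x$. The main obstacle is the good-set step: without doubling I cannot differentiate along arbitrary balls, but the dyadic martingale convergence theorem applies to any locally finite Borel measure and, via the level-set reduction, transfers to medians. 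The new ingredient compared with Lerner's proof is the parent-split in the telescope, which internalizes the missing doubling estimate as the additive $|m(f,F)-m(f,\hat{F})|$ terms on the right-hand side.
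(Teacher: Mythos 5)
Your proposal takes a genuinely different route from the paper and is essentially sound, but it contains one constant-level slip that needs to be repaired with the quasi-minimizer lemma.

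The paper's stopping condition compares $m(f;F')$ to the \emph{parent's} median $m(f;\hat F)$, with the combined threshold $3\,r_\lambda(f-m(f;\hat F);F)$, which by its display \eqref{comparisionofoscillationquantities} is simultaneously comparable to $\omega_\lambda(f;F)+\abs{m(f,F)-m(f,\hat F)}$; the recursion then directly controls $\abs{f-m(f,\hat F)}1_F$ at each stage, and the sparseness is obtained by a contrapositive of Lemma~\ref{lemma_3rproperty} with an auxiliary $\kappa\to 1/2$. You instead centre the stopping rule on $m(f,F)$ itself, use the bare oscillation as threshold, verify sparseness via the elementary median half-mass property, and then recover the $\abs{m(f,F)-m(f,\hat F)}$ terms by explicitly splitting each telescoping increment $m(f,F_{j+1}^x)-m(f,F_j^x)$ through the parent $\widehat{F_{j+1}^x}$. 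This is a cleaner separation of the two terms on the right-hand side, and your sparseness argument is more direct than the paper's; the two constructions arrive at the same sparseness constant $2\lambda$.

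The slip is in the sentence \lq\lq Since $\mu(B^*_F)\leq\lambda\mu(F)$ by the definition of $\omega_\lambda$.\rq\rq\ The definition of $\omega_\lambda(f;F)$ as an infimum over centres $c$ of $r_\lambda(f-c;F)$ only guarantees $\mu(F\cap\set{\abs{f-c^*}>\omega_\lambda(f;F)})\leq\lambda\mu(F)$ for the (near-)optimal centre $c^*$, which need not equal $m(f,F)$; the median is only a $2$-quasi-minimizer of $r_\lambda$ (Lemma~\ref{lemma_quasiminimizer}). So with your $B^*_F$ centred at $m(f,F)$ and radius $\omega_\lambda(f;F)$, the bound $\mu(B^*_F)\leq\lambda\mu(F)$ does not follow. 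The fix is immediate and cosmetic: either use $r_\lambda(f-m(f,F);F)$ as the stopping threshold, or keep $\omega_\lambda$ but insert the factor $2$ from Lemma~\ref{lemma_quasiminimizer}, i.e.\ declare $F'\in\chf(F)$ when $\abs{m(f,F')-m(f,F)}>2\omega_\lambda(f;F)$ and set $B^*_F:=\set{x\in F:\abs{f(x)-m(f,F)}>2\omega_\lambda(f;F)}$, so that $\mu(B^*_F)\leq\lambda\mu(F)$ holds; the half-mass argument and the telescope then go through verbatim with a harmless extra factor of $2$ absorbed into the implied constant. Your use of dyadic martingale convergence via level sets is simply a proof sketch of Fujii's Lemma (Lemma~\ref{lemma_fujii}), which the paper invokes as a black box; that part is fine for general locally finite $\mu$.
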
 
\begin{remark}Because of the analogy between median oscillation and mean oscillation, this can be viewed as a median oscillation decomposition adapted to the dyadic (martingale) BMO, as explained in Subsection \ref{discussion_bmo}.
\end{remark}

To keep this note as short as possible, only a tiny part of the story on the dyadic positive operators (story which revolves around the $A_2$ theorem) is told; For a bigger picture, see, for example, the introduction and the discussion in Lacey's paper \cite{lacey2015}, or Hyt\"onen's survey on the $A_2$ theorem \cite{hytonen2014}.

\section{Pointwise domination theorem for positive dyadic operators}\label{sec_pointwisedomination}
\subsection{Alternative proof by adapting Lacey's recent technique}
\begin{proof}[Alternative proof for Theorem \ref{thm_domination}]To avoid writing the absolute value $\abs{\cdotroomy}$, we assume that the function $f$ is non-negative. 

We define
\begin{equation}\label{def_akoperator}
A_kf:=\sum_{S\in\cs} \angles{f}_{S^{(k)}} 1_S =\sum_{Q\in\cd} \angles{f}_Q \sum_{\substack{S\in\cs :\\ S^{(k)}=Q}}1_S=:\sum_{Q\in\cd} \angles{f}_Q \eta_Q.
\end{equation} We observe that each auxiliary function $n_Q$ satisfies $\eta_{Q}\leq 1_Q$. Moreover, the auxiliary function $\eta_{Q}$ is constant on each $Q'\in\ch_\cd^{(k)}(Q)$. 

For each $F\in\cd$, let $\chf(F)$ denote the collection of all the maximal $F'\in\{F'\in \cd : F'\subseteq F\}$ such that
\begin{equation}\label{eq_stop_weak}
\Big(\sum_{\substack{Q\in\cd :\\ F'^{(k)}\subseteq Q \subseteq F}} \angles{f}_Q \eta_Q(x)\Big)_{x\in F'} > 4 \norm{A_k}_{L^1\to L^{1,\infty}}\angles{f}_F,
\end{equation}
or
\begin{equation}\label{eq_stop_principal}
\angles{f}_{F'}> 4 \angles{f}_F.
\end{equation}
We observe that the weak-$L^1$ estimate implies that the cubes $F'$ satisfying the first stopping condition satisfy the measure condition: 
\begin{equation*}
\begin{split}
\sum_{F'} \mu(F')&=\sum_{F'} \mu(F'\cap \{\sum_{\substack{Q\in\cd :\\ F'^{(k)}\subseteq Q \subseteq F}} \angles{f}_Q \eta_Q > 4 \angles{f}_F \})\\
&\leq \mu(\{A_k(1_Ff) > 4 \norm{A_k}_{L^1\to L^{1,\infty}}\angles{f}_F \})\leq \frac{1}{4} \mu(F).
\end{split}
\end{equation*}
Similarly, the cubes $F'$ satisfying the second stopping condition satisfy the measure condition $\sum_{F'} \mu(F')\leq \frac{1}{4}\mu(F)$. Altogether, $\sum_{F'\in\ch(F)} \mu(F')\leq \frac{1}{2} \mu(F).$

Now, by decomposing the summation and invoking the stopping conditions,
\begin{equation}\label{eq_iterationstep}
\begin{split}
S_{\subseteq F}:=&\sum_{Q\in\cd: Q\subseteq F} \angles{f}_Q \eta_Q\\
=&\sum_{Q\in\cd: Q\subseteq F} \angles{f}_Q \eta_Q1_{\ef(F)}+\sum_{F'\in\chf(F)}\sum_{Q\in\cd: Q\subseteq F} \angles{f}_Q \eta_Q1_{F'} \\
=&\sum_{Q\in\cd: Q\subseteq F} \angles{f}_Q \eta_Q1_{\ef(F)}+ \sum_{F'\in\chf(F)}\sum_{\substack{Q\in\cd: \\ F'^{(k+1)}\subseteq Q \subseteq F}} \angles{f}_Q \eta_Q1_{F'} \\
&+\sum_{F'\in\chf(F)}\sum_{\substack{Q\in\cd: \\F'^{(1)}\subseteq Q\subseteq \min\{F'^{(k)},F\}}} \angles{f}_Q \eta_Q1_{F'}+ \sum_{F'\in\chf(F)} \sum_{Q\in\cd : Q\subseteq F'}\angles{f}_Q \eta_Q \\
\leq& 4 \norm{A_k}_{L^1\to L^{1,\infty}}\angles{f}_F (1_{\ef(F)}+  \sum_{F'\in\chf(F)}1_{F'})+k 4\angles{f}_F 1_F+\sum_{F'} S_{\subseteq F'},
\end{split}
\end{equation}
where the last step follows from the following observations:
\begin{itemize}
\item 
For each $x\in \ef(F)$, every $R\in\{R\in\cd : R\subseteq F\}$ such that $R\ni x$ satisfies the opposite of the stopping condition \eqref{eq_stop_weak}. Therefore,
$$
\sum_{Q\in\cd: Q\subseteq F} \angles{f}_Q \eta_Q(x)=\lim_{\substack{Q\in\cd: R\ni x, \ell(R)\to 0}} \Big(\sum_{Q\in\cd: R^{(k)}\subseteq Q\subseteq F} \angles{f}_Q\eta_Q\Big)_{x\in R} \leq  4 \norm{A_k}_{L^1\to L^{1,\infty}}\angles{f}_F.
$$
\item 
By maximality, the cube $F'^{(1)}$ satisfies the opposite of the stopping condition \eqref{eq_stop_weak}. Therefore,
$$
\sum_{\substack{Q\in\cd: \\ F^{(k+1)}=(F'^{(1)})^{(k)}\subseteq Q \subseteq F}} \angles{f}_Q \eta_Q1_{F'} \leq 4 \norm{A_k}_{L^1\to L^{1,\infty}}\angles{f}_F.
$$
\item 
By maximality, every cube $Q\in\cd$ such that $F'^{(1)}\subseteq Q\subseteq \min\{F'^{(k)},F\}$ satisfies the opposite of the stopping condition \eqref{eq_stop_principal}. Therefore, $\angles{f}_Q\leq 4 \angles{f}_F$ for all such cubes $Q$. 
\end{itemize}
By Proposition \ref{prop_weaklone}, we have $\norm{A_k}_{L^1(\mu)\to L^{1,\infty}(\mu)}\lesssim 1$. Note that the weak $L^1$ estimate for the operator $A_k$ is independent of $k$, whereas the weak $L^1$ estimate for the adjoint operator $A_k^*$ depends linearly on $k$.  The proof is completed by iteration, starting from the maximal cube (which exists, by assumption) of the collection $\cs$.
\end{proof}

\subsection{Weak-$L^1$ estimate for positive dyadic operators}
\begin{proposition}[Weak $L^1$ for positive dyadic operators]\label{prop_weaklone}Let $\mu$ be a locally finite Borel measure. Let $A_k$ be defined as in \eqref{def_akoperator}. Then
$$
\norm{A_kf}_{L^1\to L^{1,\infty}}\lesssim 1.
$$
\end{proposition}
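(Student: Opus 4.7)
The plan is to derive the uniform-in-$k$ weak-$(1,1)$ bound by a Calder\'on--Zygmund argument whose only non-routine input is the $L^{2}(\mu)$-boundedness of $A_{k}$ with constant independent of $k$. Since $A_{k}$ is linear and $\abs{A_{k}h}\le A_{k}\abs{h}$, we may reduce to $f\ge 0$.

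For the $L^{2}$ bound I would argue by duality. For $g\in L^{2}(\mu)$,
\begin{equation*}
\langle A_{k}f,g\rangle=\sum_{S\in\cs}\langle f\rangle_{S^{(k)}}\langle g\rangle_{S}\,\mu(S),
\end{equation*}
so Cauchy--Schwarz in the sum reduces matters to the two Carleson-type estimates
\begin{equation*}
\sum_{S\in\cs}\langle f\rangle_{S^{(k)}}^{2}\mu(S)\lesssim\norm{f}_{L^{2}}^{2},\qquad
\sum_{S\in\cs}\langle g\rangle_{S}^{2}\mu(S)\lesssim\norm{g}_{L^{2}}^{2}.
\end{equation*}
Both rest on the sparseness of $\cs$: the sets $E_{S}:=S\setminus\bigcup_{S'\in\chs(S)}S'$ are pairwise disjoint with $\mu(E_{S})\ge(1-\gamma)\mu(S)$, and since $E_{S}\subseteq S\subseteq S^{(k)}$, each of $\langle f\rangle_{S^{(k)}}$ and $\langle g\rangle_{S}$ is dominated pointwise on $E_{S}$ by the dyadic Hardy--Littlewood maximal function $M^{d}$. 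Thus each of the above sums is controlled by $\int (M^{d}f)^{2}\,d\mu$ respectively $\int (M^{d}g)^{2}\,d\mu$, and the $L^{2}$-boundedness of $M^{d}$ (which holds for any locally finite Borel measure, via its weak-$(1,1)$ and $L^{\infty}$ bounds) yields $\norm{A_{k}}_{L^{2}\to L^{2}}\lesssim 1$ uniformly in $k$.

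Given the $L^{2}$ bound, I would perform the dyadic Calder\'on--Zygmund decomposition of $f$ at height $\alpha>0$: let $\{Q_{j}\}$ be the maximal dyadic cubes with $\langle f\rangle_{Q_{j}}>\alpha$, set $\Omega:=\bigcup_{j}Q_{j}$, and split $f=g+b$ with $g:=f\,1_{\Omega^{c}}+\sum_{j}\langle f\rangle_{Q_{j}}1_{Q_{j}}$ and $b:=\sum_{j}b_{j}$, $b_{j}:=(f-\langle f\rangle_{Q_{j}})1_{Q_{j}}$. Standard bookkeeping gives $\norm{g}_{\infty}\le 2^{d}\alpha$, $\norm{g}_{L^{1}}\le\norm{f}_{L^{1}}$ and $\mu(\Omega)\le\norm{f}_{L^{1}}/\alpha$. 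Chebyshev together with the $L^{2}$ bound handles the good part:
\begin{equation*}
\mu(A_{k}g>\alpha/2)\lesssim\alpha^{-2}\norm{g}_{L^{2}}^{2}\lesssim\alpha^{-1}\norm{g}_{L^{1}}\lesssim\norm{f}_{L^{1}}/\alpha.
\end{equation*}

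For the bad part, the crucial observation is that $A_{k}b$ vanishes off $\Omega$. Fix $x\notin\Omega$ and $S\in\cs$ with $S\ni x$; then $S\not\subseteq Q_{j}$ for any $j$, so by dyadic nesting either $S^{(k)}\cap Q_{j}=\emptyset$ or $S^{(k)}\supseteq Q_{j}$, and in both cases $\int_{S^{(k)}}b_{j}\,d\mu=0$ because $\int b_{j}\,d\mu=0$. Summing over $j$ gives $\langle b\rangle_{S^{(k)}}=0$, hence $A_{k}b(x)=0$. Therefore $\mu(\{\abs{A_{k}b}>\alpha/2\})\le\mu(\Omega)\le\norm{f}_{L^{1}}/\alpha$, and combined with the good-part estimate this completes the proof. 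The main obstacle is the uniform-in-$k$ $L^{2}$ bound; once sparseness is leveraged through the Carleson embedding, the weak-$(1,1)$ argument follows the standard Calder\'on--Zygmund blueprint.
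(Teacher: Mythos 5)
Your $L^2(\mu)$ bound (via duality, sparseness, and the dyadic maximal function) and your treatment of the mean-zero bad part are both correct and match the spirit of the paper's argument. The gap is in the good part: the bound $\norm{g}_{\infty}\lesssim\alpha$, which your chain of inequalities $\alpha^{-2}\norm{g}_{L^2}^2\lesssim\alpha^{-1}\norm{g}_{L^1}$ silently relies on, is \emph{false} for a general (non-doubling) locally finite Borel measure. The usual justification is $\langle f\rangle_{Q_j}\le\frac{\mu(\hat{Q_j})}{\mu(Q_j)}\langle f\rangle_{\hat{Q_j}}\le\frac{\mu(\hat{Q_j})}{\mu(Q_j)}\alpha$, and for $\mu$ non-doubling the ratio $\mu(\hat{Q_j})/\mu(Q_j)$ is unbounded, so the averages on the stopping cubes can be arbitrarily large compared to $\alpha$. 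Thus $\norm{g}_{L^2}^2\lesssim\alpha\norm{f}_{L^1}$ does not follow, and the Chebyshev step for $A_kg$ breaks.

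This is precisely the difficulty the paper is addressing. It replaces the classical two-piece decomposition by the L\'opez-S\'anchez--Martell--Parcet decomposition $f=g+b+\beta$, where the good part is built from $\langle f\rangle_{\hat{Q_j}}$ (so that the $L^p$ bounds on $g$ do hold) and the discrepancy $(\langle f\rangle_{Q_j}-\langle f\rangle_{\hat{Q_j}})1_{Q_j}$, suitably balanced to have mean zero over $\hat{Q_j}$, is collected into the extra piece $\beta=\sum_T\beta_{\hat T}$, each $\beta_{\hat T}$ supported in $\hat T$, mean zero, and constant on $T$ and on $\hat T\setminus T$. Your mean-zero cancellation argument then handles $\beta$ on $\hat T^{\,c}$, but the new work is estimating $A_k\beta$ on $\hat T\setminus T$, which is not contained in $\Omega$; the paper does this by exploiting that $\beta_{\hat T}$ is constant there, writing $1_{\hat T\setminus T}A(\beta_{\hat T})=\langle\beta_{\hat T}\rangle_{\hat T\setminus T}\sum_{Q\subseteq\hat T\setminus T}\eta_Q$ and bounding $\sum_{S\in\cs:\,S\subseteq\hat T\setminus T}\mu(S)\lesssim\mu(\hat T\setminus T)$ via sparseness. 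Your proof as written establishes the proposition only for doubling $\mu$; to cover the stated generality you need this extra piece and its estimate.
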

\begin{remark}
The weak $L^1$ estimate for the operator $A_k$ is proven using the Calder\'on--Zygmund decomposition: In the case of a doubling measure, this is proven as in \cite[Proof of Proposition 5.1]{hytonen2012} or in \cite[Proof of Lemma 5.4]{lerner2013}; In the case of an arbitrary (possibly non-doubling) measure, the Calder\'on--Zygmund decomposition contains an additional term, for which the weak $L^1$ estimate is checked in what follows. 
\end{remark}
We prove the weak-$L^1$ boundedness by using the Calder\'on--Zygmund decomposition for general measures obtained by L\'opez--S\'anchez, Martell, and Parcet:

\begin{lemma}[Calder\'on--Zygmund decomposition for general measures, Theorem 2.1 in \cite{lopez2012}]\label{lem_calderon}Let $\mu$ be a locally finite Borel measure on $\br^d$. Assume that the measure of each $d$-dimensional quadrant is infinite. Then, for each $f\in L^1$ and $\lambda>0$, there exists a decomposition
$$
f=g+b+\beta
$$
such that the pieces satisfy the following properties:
\begin{itemize}
\item The function $g$ satisfies
$$
\norm{g}^p_{L^p}\lesssim_p \lambda^{p-1} \norm{f}_{L^1}
$$
for every $1\leq p <\infty$.

\item The function $b$ has the decomposition $b=\sum_{T\in\ct} b_T$ such that
$$
\supp(b_T)\subseteq T, \quad \int b_T \dmu=0,\quad   \sum_{T\in\ct} \norm{b_T}_{L^1}\lesssim \norm{f}_{L^1}.
$$
\item The function $\beta$ has the decomposition $\beta=\sum_{T\in\ct} \beta_{\that}$ such that
$$
\supp(\beta_{\that})\subseteq \that, \quad \int \beta_{\that} \dmu=0,\quad \sum_{T\in\ct} \norm{b_{\that}}_{L^1}\lesssim \norm{f}_{L^1},
$$
and $\beta_{\that}$ is constant on $T$ and on $\that\setminus T$.
\item The cubes $T$ are the maximal (which exist because, by assumption, the measure of each $d$-dimensional quadrant is infinite) dyadic cubes such that $\angles{\abs{f}}_T>\lambda$ . Hence, they are pairwise disjoint, and their union $\Omega:=\bigcup_{T} T$ satisfies
$
\mu(\Omega)\leq \frac{1}{\lambda} \int \abs{f} \dmu.
$
\end{itemize}
\end{lemma}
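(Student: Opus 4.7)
The plan is to adapt the classical Calder\'on--Zygmund decomposition at height $\lambda$ by introducing an extra piece $\beta$ to handle the non-doubling character of $\mu$. First, I would let $\mathcal{T}$ be the family of maximal dyadic cubes $T$ with $\angles{\abs{f}}_T>\lambda$; such maximal cubes exist because, by assumption, every $d$-dimensional quadrant has infinite $\mu$-measure. These cubes are pairwise disjoint, and summing the defining inequality yields $\mu(\Omega)\leq\lambda^{-1}\norm{f}_{L^1}$ for $\Omega:=\bigcup_T T$. Lebesgue differentiation ensures $\abs{f}\leq\lambda$ almost everywhere on $\Omega^c$.

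In the doubling case, one takes $g:=f1_{\Omega^c}+\sum_T\angles{f}_T1_T$, and the bound $\norm{g}_\infty\lesssim\lambda$ follows from $\angles{\abs{f}}_T\leq 2^d\angles{\abs{f}}_{\hat T}\leq 2^d\lambda$. Here this argument breaks down because the ratio $\mu(T)/\mu(\hat T)$ is uncontrolled, and $|\angles{f}_T|$ may be arbitrarily large compared to $\lambda$. I would remedy this by using the parent average $\angles{f}_{\hat T}$ in place of $\angles{f}_T$. Since $\hat T$ is not selected, $|\angles{f}_{\hat T}|\leq\angles{\abs{f}}_{\hat T}\leq\lambda$; the error $\angles{f}_T-\angles{f}_{\hat T}$ produced by this substitution is exactly what $\beta$ is designed to absorb.

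Concretely, I would set $b_T:=(f-\angles{f}_T)1_T$ (supported in $T$, mean zero on $T$) and define
\[
\beta_{\hat T}:=\bigl(\angles{f}_T-\angles{f}_{\hat T}\bigr)\Bigl(1_T-\frac{\mu(T)}{\mu(\hat T)}1_{\hat T}\Bigr),
\]
which is supported in $\hat T$, constant on $T$ and on $\hat T\setminus T$, and has $\int\beta_{\hat T}\,\mathrm{d}\mu=0$ by construction. Setting $g:=f-b-\beta$, on each selected cube $T$ this value equals $\angles{f}_{\hat T}$ modulo contributions coming from the $\hat{T}'\setminus T'$ pieces of other $\beta_{\hat{T}'}$'s that happen to contain $T$; each such contribution is of size at most $\lambda$, yielding $\norm{g}_\infty\lesssim\lambda$.

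The remaining estimates are then routine. From $\norm{g}_\infty\lesssim\lambda$ and $\norm{g}_{L^1}\lesssim\norm{f}_{L^1}$ (the latter obtained by splitting over $\Omega$ and $\Omega^c$ and using $\mu(\Omega)\leq\lambda^{-1}\norm{f}_{L^1}$) one concludes $\norm{g}_{L^p}^p\lesssim\lambda^{p-1}\norm{f}_{L^1}$. The bound $\sum_T\norm{b_T}_{L^1}\leq 2\norm{f}_{L^1}$ is immediate from the disjointness of the $T$'s, while $\sum_T\norm{\beta_{\hat T}}_{L^1}\lesssim\norm{f}_{L^1}$ follows from the triangle inequality $|\angles{f}_T-\angles{f}_{\hat T}|\mu(T)\leq\int_T\abs{f}\,\mathrm{d}\mu+\lambda\mu(T)$ combined with $\mu(\Omega)\leq\lambda^{-1}\norm{f}_{L^1}$. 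The main obstacle, as I see it, is the combinatorics needed to verify simultaneously the identity $f=g+b+\beta$ and the bound $\norm{g}_\infty\lesssim\lambda$: since one selected cube can sit inside the parent of several others, one must carefully track how the compensating constants on the regions $\hat{T}'\setminus T'$ overlap so that they telescope correctly on $\Omega^c$ and aggregate to a controlled value on $\Omega$; modulo this bookkeeping, every ingredient is standard.
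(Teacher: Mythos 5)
The paper does not prove this lemma itself; it is quoted verbatim from L\'opez-S\'anchez, Martell, and Parcet (Theorem 2.1 in \cite{lopez2012}), so there is no in-paper argument to compare against, and your attempt must stand on its own.

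Your stopping-time selection of $\mathcal{T}$, the definition $b_T=(f-\angles{f}_T)1_T$, the candidate $\beta_{\hat T}=(\angles{f}_T-\angles{f}_{\hat T})\big(1_T-\frac{\mu(T)}{\mu(\hat T)}1_{\hat T}\big)$ together with its support, mean-zero and two-constant properties, and the $L^1$ estimates for $b$ and $\beta$ are all sound. The real gap is the step you label as ``bookkeeping,'' and it is not a routine detail but the analytical heart of the non-doubling case: the bound $\norm{g}_\infty\lesssim\lambda$ (equivalently, the $L^p$ estimate). Your justification is that the contributions of the other $\beta_{\hat T'}$'s are ``each of size at most $\lambda$.'' That observation is true but does not close the argument: for a fixed $x$, \emph{every} dyadic ancestor $Q$ of $x$ may contain a selected cube $T'\in\mathcal{T}$ in $Q\setminus Q'$ (where $Q'$ is the child of $Q$ containing $x$), so the number of nonzero terms $\beta_{\hat T'}(x)$ is unbounded --- one per scale --- and a sum of unboundedly many $O(\lambda)$ terms is not $O(\lambda)$. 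Grouping by the parent $Q$, the level-$Q$ contribution to $g(x)-f(x)$ is $\frac{1}{\mu(Q)}\big[\int_{E_Q}f - \angles{f}_Q\,\mu(E_Q)\big]$ with $E_Q$ the union of the selected children of $Q$ not containing $x$; summing this over all scales $Q\ni x$ requires a genuine telescoping or summation-by-parts argument that exploits $\angles{\abs{f}}_Q\leq\lambda$ at every non-selected scale, and it is not a priori clear that your particular choice of $\beta_{\hat T}$ makes this sum convergent, let alone $O(\lambda)$. Until that estimate is actually established, the proof is incomplete at its most essential point; this is precisely the part that is carried out in detail in \cite{lopez2012}.
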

\begin{proof}[Proof of Proposition \ref{prop_weaklone}]We suppress the complexity $k$ in the notation. By using the Calder\'on--Zygmund decomposition (Lemma \ref{lem_calderon}), we decompose
$$
\mu(\set{\abs{Af}>\lambda})\leq \mu(\set{\abs{Ag}>\frac{\lambda}{3}})+\mu(\set{\abs{Ab}>\frac{\lambda}{3}}\cap \Omega^c)+\mu(\set{\abs{A\beta}>\frac{\lambda}{3}}\cap \Omega^c)+\mu(\Omega).
$$
By Chebyshev's inequality together with the $L^p\to L^p$ boundedness of the operator $A_k$, we have
$$
\mu(\set{\abs{Ag}>\frac{\lambda}{3}})\lesssim \frac{1}{\lambda^p} \norm{Ag}_{L^p}\lesssim_p \frac{1}{\lambda^p} \norm{g}_{L^p}^p \lesssim_p  \frac{1}{\lambda^p} \lambda^{(p-1)} \norm{f}_{L^1}.
$$
We observe that  $1_{T^c} A(h_T)=0$ whenever $h_T$ is such that $\supp(h_T)\subseteq T$ and $\int h_T \dmu=0$. This together with Chebyshev's inequality implies that 
$$
\mu(\set{Ab>\frac{\lambda}{3}}\cap \Omega^c)\lesssim \frac{1}{\lambda} \sum_{T} \int_{T^c} \abs{Ab_T} \dmu=0,
$$
and
\begin{equation*}
\begin{split}
\mu(\set{A\beta >\frac{\lambda}{3}}\cap \Omega^c)&\lesssim \frac{1}{\lambda} \sum_{T} \int_{T^c} \abs{A\beta_{\that}} \dmu= \frac{1}{\lambda}\sum_{T} \int_{\that\setminus T} \abs{A(\beta_{\that})}\dmu.
\end{split}
\end{equation*}
Since $\beta_{\that}$ is contant on $\that\setminus T$, we have $$1_{\that\setminus T} A(\beta_{\that})=\sum_{Q:Q\subseteq \that\setminus T} \angles{\beta_{\that}}_{Q} \eta_Q=\angles{\beta_{\that}}_{\that\setminus T} \sum_{Q:Q\subseteq \that\setminus T} \eta_Q.$$
Recall that, by definition, $\eta_Q:=\sum_{S\in\cs: S^{(k)}=Q}1_S$, where $\cs$ is a sparse collection. Therefore, by sparseness,
\begin{equation*}
\begin{split}
&\int_{\that\setminus T} \abs{A(\beta_{\that})}\dmu\leq \abs{\angles{\beta_{\that}}_{\that\setminus T}}  \sum_{S\in\cs:S\subseteq \that\setminus T}\mu(S)\\
&\lesssim \abs{\angles{\beta_{\that}}_{\that\setminus T}}  \sum_{S\in\cs:S\subseteq \that\setminus T}\mu(\es(S)) \leq \abs{\angles{\beta_{\that}}_{\that\setminus T}} \mu(\that\setminus T)\leq \norm{\beta_{\that}}_{L^1}.
\end{split}
\end{equation*}
The proof is completed by the property $\sum_{T\in\ct} \norm{b_{\that}}_{L^1}\lesssim \norm{f}_{L^1}$.

\end{proof}

\section{Median oscillation decomposition}\label{sec_mediandecomposition}
\begin{convention}Throughout this section, the parameter $\lambda$ is an arbitrary real number such that $0<\lambda<1/2$.\end{convention}

\subsection{Definition of median and median oscillation}\label{sec_definitions}

\begin{itemize}
\item The {\it median} $m(f;Q)$ of a function $f$ on a cube $Q$ is defined as any real number such that 
$$
\frac{\mu(Q\cap \set{f>m(f;Q)})}{\mu(Q)}\leq \frac{1}{2}\quad\text{ and } \quad \frac{\mu(Q\cap \set{f<m(f;Q)})}{\mu(Q)}\leq \frac{1}{2}.
$$
\item The {\it relative median oscillation} $r_\lambda(f;Q)$ of a function $f$ ({\it about zero}) on a cube $Q$ is defined by
$$
r_\lambda(f;Q):=\min\set{r\geq 0 : \mu(Q\cap \set{\abs{f}>r})\leq \lambda \mu(Q)}.
$$
Note that, by means of decreasing rearrangement, the relative median oscillation is written as $r_\lambda(f;Q)=(1_Qf)^*(\lambda \mu(Q))$.  The quantity $r_\lambda(f-c;Q)$ is the {\it   relative median oscillation } of a function $f$ {\it about a real number $c$} on a cube $Q$.  
\item The {\it median oscillation} $\omega_\lambda(f;Q)$ of a function $f$ on a cube $Q$ is defined by
$$
\omega_\lambda(f;Q):=\inf_{c\in\br} r_\lambda(f-c;Q).
$$
\end{itemize}

\subsection{Properties of median and median oscillation}
For reader's convenience, we summarize the properties of median that we need. The properties are all well-known. For proofs, see, for example, the lecture notes \cite[Section 5]{hytonen2014b}. 
\begin{lemma}[Every median quasiminimizes the median oscillation]\label{lemma_quasiminimizer}We have $$
r_\lambda(f-m(f;Q);Q) \leq 2 \omega_\lambda(f;Q).$$
\end{lemma}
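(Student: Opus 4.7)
The plan is to pick a near-minimizer $c \in \mathbb{R}$ of the infimum defining $\omega_\lambda(f;Q)$, show that any such $c$ must lie within distance $r := r_\lambda(f-c;Q)$ of the median $m(f;Q)$, and then use the triangle inequality to transfer the level-set estimate from $f-c$ to $f-m(f;Q)$ at the cost of doubling the radius.

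In detail, I would argue as follows. Fix $c \in \mathbb{R}$ and set $r := r_\lambda(f-c;Q)$, so that by definition $\mu(Q \cap \{|f-c| > r\}) \leq \lambda \mu(Q)$. The first step is the claim that $|c - m(f;Q)| \leq r$. Suppose for contradiction that $c > m(f;Q) + r$ (the other case is symmetric). On the set $\{f \leq m(f;Q)\}$ one has $f - c \leq m(f;Q) - c < -r$, hence $|f-c| > r$, so
\[
\{f \leq m(f;Q)\} \cap Q \;\subseteq\; \{|f-c| > r\} \cap Q.
\]
By the defining property of the median, the left-hand side has measure at least $\tfrac{1}{2}\mu(Q)$, while the right-hand side has measure at most $\lambda \mu(Q) < \tfrac{1}{2}\mu(Q)$, contradiction.

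With $|c - m(f;Q)| \leq r$ in hand, the triangle inequality yields $|f - m(f;Q)| \leq |f-c| + r$, so that $\{|f - m(f;Q)| > 2r\} \subseteq \{|f - c| > r\}$. Taking $\mu(Q \cap \cdotroomy)$-measure on both sides gives $\mu(Q \cap \{|f-m(f;Q)| > 2r\}) \leq \lambda \mu(Q)$, i.e., $r_\lambda(f - m(f;Q); Q) \leq 2r = 2 r_\lambda(f-c;Q)$. Since $c$ was arbitrary, taking the infimum over $c$ (or, more carefully, applying the bound to a sequence $c_n$ with $r_\lambda(f-c_n;Q) \to \omega_\lambda(f;Q)$) yields $r_\lambda(f-m(f;Q);Q) \leq 2\omega_\lambda(f;Q)$, as desired.

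The only delicate point is the strict-vs-nonstrict inequality bookkeeping in the first step (one has to exploit $\lambda < 1/2$ to get a strict contradiction), but this is routine. Everything else is triangle inequality and the defining properties of the median and of $r_\lambda$.
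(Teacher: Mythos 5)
Your proof is correct and is essentially the standard argument (the paper does not reprove this lemma but defers to Hyt\"onen's lecture notes). Worth noting: the intermediate claim you establish, $\abs{c - m(f;Q)} \leq r_\lambda(f-c;Q)$ for every $c$, is a sharper form of the paper's Lemma~\ref{lemma_3rproperty} with constant $1$ in place of $3$; the paper instead derives that lemma from the present one by a pigeonhole argument, losing a factor, so your route would in fact streamline the constants. The bookkeeping point you flag is handled exactly as you indicate, using $\lambda < 1/2$ together with the implicit assumption $\mu(Q) > 0$ (the case $\mu(Q) = 0$ being trivial since both sides vanish).
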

\begin{lemma}[Median is linear]We have $$m(f+c;Q)=m(f;Q)+c.$$ Since median is not unique, this slight abuse of notation is understood as an identity for the set of all medians: $\{m: \text{$m$ is a median of $(f+c)$ on $Q$}\}=\{m': \text{$m'$ is a median of $f$ on $Q$}\}+c$.
\end{lemma}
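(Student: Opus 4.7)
The statement is an identity between two sets of medians, so the plan is simply to unfold the definition of median and use the elementary translation identity for level sets. Specifically, a real number $m$ lies in the median set of $f+c$ on $Q$ precisely when
\begin{equation*}
\frac{\mu(Q\cap \{f+c>m\})}{\mu(Q)}\leq \frac{1}{2}\quad\text{and}\quad \frac{\mu(Q\cap \{f+c<m\})}{\mu(Q)}\leq \frac{1}{2}.
\end{equation*}
The plan is to rewrite the level sets via $\{f+c>m\}=\{f>m-c\}$ and $\{f+c<m\}=\{f<m-c\}$, which is a pointwise identity of sets requiring no measure-theoretic argument.

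After that substitution, the two inequalities become exactly the defining conditions for $m-c$ to be a median of $f$ on $Q$. Hence $m$ is a median of $f+c$ on $Q$ if and only if $m-c$ is a median of $f$ on $Q$, which is the asserted set equality
\begin{equation*}
\{m: m \text{ is a median of }(f+c)\text{ on }Q\}=\{m': m' \text{ is a median of }f\text{ on }Q\}+c.
\end{equation*}

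There is no real obstacle here; the only thing to be careful about is to treat medians as sets rather than single values, since the median is not unique in general, and to note that the argument is symmetric (i.e.\ performing the substitution in the reverse direction establishes the opposite inclusion). The proof is therefore a two-line unraveling of definitions, and no auxiliary lemma is needed.
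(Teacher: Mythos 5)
Your proof is correct and complete: the set identities $\{f+c>m\}=\{f>m-c\}$ and $\{f+c<m\}=\{f<m-c\}$ immediately translate the defining inequalities for a median of $f+c$ into those for a median of $f$, giving the asserted equality of median sets. The paper itself omits the proof as well-known (referring to lecture notes), and your argument is exactly the standard definitional one.
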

\begin{lemma}[Median is controlled by the relative median oscillation]\label{lemma_3rproperty}We have $$
\abs{m(f;Q)-c}\leq 3 r_\lambda(f-c;Q).$$
\end{lemma}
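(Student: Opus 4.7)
The plan is to combine the preceding lemma (every median quasiminimizes the median oscillation) with a one-point triangle-inequality argument.

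First, since $\omega_\lambda(f;Q)=\inf_{c'\in\br}r_\lambda(f-c';Q)$, I take $c'=c$ to obtain the trivial bound $\omega_\lambda(f;Q)\leq r_\lambda(f-c;Q)$. Plugging this into Lemma \ref{lemma_quasiminimizer} yields
\[
r_\lambda\bigl(f-m(f;Q);Q\bigr)\leq 2\,\omega_\lambda(f;Q)\leq 2\,r_\lambda(f-c;Q).
\]

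Next, I look for a point $x\in Q$ at which both $f(x)$ is close to $c$ and $f(x)$ is close to $m(f;Q)$. By the very definition of $r_\lambda$, the exceptional sets
\[
E_1:=Q\cap\{\abs{f-c}>r_\lambda(f-c;Q)\},\qquad E_2:=Q\cap\{\abs{f-m(f;Q)}>r_\lambda(f-m(f;Q);Q)\}
\]
each have $\mu$-measure at most $\lambda\mu(Q)$. Because $\lambda<1/2$, we have $\mu(E_1\cup E_2)\leq 2\lambda\mu(Q)<\mu(Q)$, so the complement $S:=Q\setminus(E_1\cup E_2)$ has positive $\mu$-measure, and in particular is nonempty. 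For any $x\in S$, the triangle inequality combined with the preceding bound gives
\[
\abs{m(f;Q)-c}\leq \abs{m(f;Q)-f(x)}+\abs{f(x)-c}\leq 2\,r_\lambda(f-c;Q)+r_\lambda(f-c;Q)=3\,r_\lambda(f-c;Q),
\]
which is the claimed inequality.

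There is no genuine obstacle here; the only point that demands a moment's care is the nonemptiness of $S$, and this is precisely where the standing assumption $\lambda<1/2$ is used (so that the two bad sets cannot jointly exhaust $Q$). The factor $3$ is the sum of a factor $1$ from the $c$-side estimate and a factor $2$ inherited from the quasiminimizer lemma.
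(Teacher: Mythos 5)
Your proof is correct and follows essentially the same route as the paper: invoke Lemma \ref{lemma_quasiminimizer} together with the trivial bound $\omega_\lambda(f;Q)\leq r_\lambda(f-c;Q)$, observe that the two exceptional sets each have measure at most $\lambda\mu(Q)<\tfrac12\mu(Q)$, pick a point in the complement, and apply the triangle inequality. The only cosmetic difference is that the paper phrases the second exceptional set directly with the threshold $2r_\lambda(f-c;Q)$, whereas you use $r_\lambda(f-m(f;Q);Q)$ and then enlarge; both give the same conclusion.
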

\begin{proof}
Using the fact that every median quasiminimizes the median oscillation (Lemma \ref{lemma_quasiminimizer}), and the definition of median oscillation, we have
$$r_\lambda(f-m(f;Q))\leq 2 \omega_\lambda(f;Q)\leq 2r_\lambda(f-c;Q).$$
This, by the definition of relative median oscillation, implies that
\begin{equation*}
\begin{split}
\mu(\set{\abs{f-m(f;Q)}>2r_\lambda(f-c;Q)})&\leq \lambda\mu (Q), \text{and }\mu(\set{\abs{f-c}>r_\lambda(f-c;Q)})\leq \lambda \mu(Q).
\end{split}
\end{equation*}
From this together with the implicit assumption $0<\lambda<1/2$, it follows that there exists $x\in Q$ such that $\abs{f(x)-m(f;Q)}\leq 2 r_\lambda(f-c;Q)$ and $\abs{f(x)-c}\leq r_\lambda(f-c;Q)$. The proof is completed by the triangle inequality.
\end{proof}
\begin{lemma}[Fujii's Lemma]\label{lemma_fujii}We have
$$
\lim_{\substack{Q\in\cd:\\Q \ni x, \ell(Q)\to 0 }} m(f;Q)=f(x)
$$
for $\mu$-almost every $x\in\br^d$.
\end{lemma}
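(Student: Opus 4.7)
The plan is to reduce the convergence of the median to the dyadic Lebesgue density theorem applied to the level sets of $f$. First, I would invoke the dyadic density theorem: for any Borel set $E\subseteq \br^d$ of locally finite $\mu$-measure, one has $\lim_{Q\ni x,\;\ell(Q)\to 0} \mu(Q\cap E)/\mu(Q)=1_E(x)$ for $\mu$-almost every $x$. This follows by a standard argument from the weak-$(1,1)$ bound for the dyadic maximal function, which requires no doubling assumption since the dyadic structure obviates any covering lemma (maximal cubes with large average are automatically disjoint).

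Next, I would enumerate the rationals and, for each rational $t$, apply the density theorem to both level sets $\{f>t\}$ and $\{f<t\}$. The countable intersection of the resulting full-measure sets is again of full measure. Fix $x$ in this intersection and set $a:=f(x)$. Given $\epsilon>0$, choose rationals $t_1\in(a,a+\epsilon)$ and $t_2\in(a-\epsilon,a)$. Since $x\notin \{f>t_1\}\cup\{f<t_2\}$, both densities $\mu(Q\cap\{f>t_1\})/\mu(Q)$ and $\mu(Q\cap\{f<t_2\})/\mu(Q)$ tend to $0$ as $Q\ni x$, $\ell(Q)\to 0$.

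Finally, once these densities are both strictly less than $1/2$, I would argue that every median $m(f;Q)$ lies in the interval $[t_2,t_1]\subseteq (a-\epsilon,a+\epsilon)$. Indeed, if $m(f;Q)>t_1$, then $\{f<m(f;Q)\}\supseteq \{f\leq t_1\}$, whence
$$\mu(Q\cap \{f<m(f;Q)\})\geq \mu(Q)-\mu(Q\cap \{f>t_1\})>\mu(Q)/2,$$
contradicting the defining property of a median. The lower bound is symmetric. Letting $\epsilon\to 0$ along rationals yields $m(f;Q)\to f(x)$.

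The main obstacle is organizational rather than technical: one must secure a single full-measure exceptional set that handles all countably many rational thresholds simultaneously, and verify the dyadic density theorem for possibly non-doubling measures. The latter point is crucial here, since the whole purpose of the section is to work without doubling; fortunately the weak-$(1,1)$ bound for the dyadic maximal function is insensitive to doubling, so this step goes through verbatim.
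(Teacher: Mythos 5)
The paper does not actually prove Fujii's Lemma; it records it as a well-known property and defers to Hytönen's lecture notes for the proof. Your argument is correct and is essentially the standard one: reduce to the dyadic Lebesgue density theorem applied to the countably many rational level sets $\{f>t\}$ and $\{f<t\}$, take the intersection of the full-measure exceptional sets, and then sandwich the median between two nearby rationals using the two defining $\leq\tfrac12$ inequalities. The point you flag is indeed the crux in this non-doubling setting: the dyadic maximal function $M_\mu^{\cd}$ satisfies the weak-$(1,1)$ bound with constant $1$ for \emph{any} locally finite Borel measure $\mu$, because $\{M_\mu^{\cd} g>\lambda\}$ decomposes into pairwise disjoint maximal dyadic cubes (or increasing chains of them) on which the average exceeds $\lambda$, so no covering lemma and hence no doubling is required. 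One small simplification worth noting in the dyadic setting: in proving the density theorem one can approximate $1_E$ in $L^1(\mu)$ by finite linear combinations of indicators of dyadic cubes, for which the dyadic averages $\langle\cdot\rangle_Q$ stabilize \emph{exactly} once $\ell(Q)$ is sufficiently small; this avoids any appeal to density of continuous functions in $L^1(\mu)$, although that would also work since locally finite Borel measures on $\br^d$ are Radon.
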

\begin{lemma}[Relative median oscillation is controlled by the weak $L^1$ norm]\label{lemma_weakl1control}We have
$$
r_\lambda(f;Q)\leq \frac{1}{\lambda} \frac{\norm{f}_{L^{1,\infty}}}{\mu(Q)}.
$$
\end{lemma}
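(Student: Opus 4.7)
The plan is to unwind the definition of the relative median oscillation and apply Chebyshev's inequality in the form of the weak-$L^1$ norm. By definition,
\[
r_\lambda(f;Q)=\min\set{r\geq 0 : \mu(Q\cap \set{\abs{f}>r})\leq \lambda \mu(Q)},
\]
so it suffices to exhibit a specific threshold $r^\ast$ for which the defining measure inequality holds, and then the minimum is at most $r^\ast$.

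The natural candidate is $r^\ast:=\norm{f}_{L^{1,\infty}}/(\lambda \mu(Q))$. To verify that this value of $r$ satisfies the defining inequality, I would first discard the intersection with $Q$ by monotonicity of $\mu$ and then use the very definition of the weak-$L^1$ norm:
\[
\mu(Q\cap \set{\abs{f}>r})\leq \mu(\set{\abs{f}>r})\leq \frac{\norm{f}_{L^{1,\infty}}}{r}.
\]
Substituting $r=r^\ast$ makes the rightmost quantity equal to $\lambda \mu(Q)$, so $r^\ast$ is admissible in the definition of $r_\lambda(f;Q)$, and the claimed inequality follows by taking the minimum.

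There is essentially no obstacle here: the lemma is a one-line consequence of Chebyshev's inequality, and the restriction $0<\lambda<1/2$ from the convention is not used (any $\lambda>0$ would work). The only mild technical point worth remarking is that the minimum in the definition of $r_\lambda$ is indeed attained, because the function $r\mapsto \mu(Q\cap \set{\abs{f}>r})$ is right-continuous and non-increasing; this is standard and needs no separate argument.
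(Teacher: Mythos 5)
Your proof is correct, and it is the standard one-line argument; the paper itself does not write out a proof of this lemma but defers to \cite[Section 5]{hytonen2014b}, where the same Chebyshev-type argument is the expected route. Your observation that the convention $0<\lambda<1/2$ is not needed here (any $\lambda>0$ suffices) and your remark on the attainment of the minimum via right-continuity of $r\mapsto\mu(Q\cap\{\abs{f}>r\})$ are both accurate.
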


\subsection{Proof of the decomposition adapted to the dyadic BMO}

\begin{proof}[Proof of Theorem \ref{thm_decompositiongeneralmeasures}]From Lemma \ref{lemma_3rproperty} and Lemma \ref{lemma_quasiminimizer}, it follows that
\begin{equation}\label{comparisionofoscillationquantities}
\begin{split}
\omega_\lambda(f;F)+\abs{m(f,F)-m(f,\hat{F})} &\eqsim r_\lambda(f-m(f,F))+\abs{m(f,F)-m(f,\hat{F})}\\
&\eqsim r_\lambda(f-m(f;\fhat);F).
\end{split}
\end{equation}

For each $F\in\cd$, let $\chf(F)$ denote the collection of all the maximal $F'\in\{F'\in \cd : F'\subseteq F\}$ such that
\begin{equation}\label{eq_stop_median}
\abs{m(f;F')-m(f;\hat{F})}>3r_\lambda(f-m(f;\fhat);F)
\end{equation}
By decomposing and using the stopping condition,
\begin{equation*}
\begin{split}
\abs{f-m(f,\fhat)}1_{F}&\leq \abs{f-m(f;\fhat)}1_{\ef(F)}+ \sum_{F'\in\chf(F)} \abs{m(f;\hat{F'})-m(f;\hat{F})}1_{F'}+ \\
&\phantom{=}\sum_{F'\in\chf(F)} \abs{f-m(f,\hat{F'})}1_{F'}\\
&\leq  3 r_\lambda(f-m(f,\fhat);F)1_F+ \sum_{F'\in\chf(F)} \abs{f-m(f,\hat{F'})}1_{F'},
\end{split}
\end{equation*}
where the last step follows from the following observations:
\begin{itemize}
\item
For each $x\in \ef(F)$, every cube $Q\in\{Q\in\cd : Q\subseteq F\}$ such that $Q\ni x$ satisfies the opposite of the stopping inequality \eqref{eq_stop_median}. Therefore, by Fujii's Lemma (Lemma \ref{lemma_fujii}),
$$
\abs{f(x)-m(f;\fhat)}=\lim_{\substack{Q\in\cd:\\Q \ni x, \ell(Q)\to 0 }} \abs{m(f;Q)-m(f;\fhat)}\leq 3 r_\lambda(f-m(f,\fhat);F)
$$ 
$\mu$-almost every $x\in \ef(F)$.
\item 
By maximality, the cube $\hat{F'}$ satisfies the opposite of the stopping inequality \eqref{eq_stop_median}. Therefore,
$$
\abs{m(f;\hat{F'})-m(f;\hat{F})}\leq 3 r_\lambda(f-m(f,\fhat);F).
$$
\end{itemize}

Finally, we check that $\sum_{F'\in\chf(F)}\mu(F')\leq 2\lambda \mu(F)$. Let $\kappa \in (0,1/2)$ be an auxiliary parameter. We note the following assertion: 
$$\text{If $\mu(Q\cap \set{\abs{f-c}>r})\leq \kappa \mu(Q),$ then $\abs{m(f;Q)-c}\leq 3 r$;}$$ This is because $\mu(Q\cap \set{\abs{f-c}>r})\leq \kappa \mu(Q)$ implies, by definition, that $r_\kappa(f-c;Q)\leq r$, from which, by Lemma \ref{lemma_3rproperty}, it follows  that $\abs{m(f;Q)-c}\leq 3 r$. The contrapositive of this assertion applied to the stopping inequality \eqref{eq_stop_median} (where we have $Q:=F'$, $c:=m(f,\fhat)$ and $r:=r_\lambda(f-m(f;\fhat);F)$) implies that
\begin{equation}
\label{eq_ontheonehand}
\mu(F'\cap \set{\abs{f-m(f,\fhat)}>r_\lambda(f-m(f;\fhat);F)})> \kappa \mu(F').
\end{equation}
On the other hand, by definition,
\begin{equation}
\label{eq_ontheotherhand}
\lambda \mu(F) \geq \mu(F\cap \set{\abs{f-m(f;\hat{F})}>r_\lambda(f-m(f;\fhat);F)}).
\end{equation}
Summing over the  cubes $F'$ (which are pairwise disjoint and satisfy $F'\subseteq F$)  in the inequality \eqref{eq_ontheonehand},  combining this with the inequality \eqref{eq_ontheotherhand}, and taking $\kappa\to 1/2$ yields
$$
\sum_{F'\in\chf(F)}\mu(F') \leq 2 \lambda \mu(F).
$$
The proof is completed by iteration.

\end{proof}
\subsection{Corollaries}
The dyadic (martingale) BMO norm is defined by
$$
\norm{f}_{BMO(\mu)}:=\sup_{Q\in\cd} \frac{1}{\mu(Q)}\int_Q \abs{f-\angles{f}_{\qhat}} \dmu.
$$
Note that, whenever the measure $\mu$ is doubling, the dyadic (martingale) BMO norm is comparable to the usual BMO norm: $\norm{f}_{BMO(\mu)}\eqsim_{\mu} \sup_{Q\in\cd} \frac{1}{\mu(Q)}\int_Q \abs{f-\angles{f}_Q} \dmu$.
\begin{proposition}[John--Nirenberg]Let $\mu$ be a locally finite Borel measure. Then, there exist positive constants $c$ and $C$ such that
$$
\frac{1}{\mu(Q)}\int_Q \exp(c \abs{f-\angles{f}_{\qhat}}/\norm{f}_{\text{BMO}})\dmu\leq C
$$
for every $f\in\text{BMO}$.
\end{proposition}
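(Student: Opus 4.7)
The plan is, for each fixed dyadic cube $Q$, to apply Theorem \ref{thm_decompositiongeneralmeasures} with initial cube $F_0=Q$ in order to obtain a pointwise sparse domination of $\abs{f-m(f,\hat{Q})}1_Q$, then bound each summand in that domination by $\norm{f}_{BMO(\mu)}$, exploit sparseness to pass to exponential integrability, and finally replace the median $m(f,\hat{Q})$ by the mean $\angles{f}_{\hat{Q}}$ at a bounded cost.

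The first task is to show that each summand $\omega_\lambda(f;F)+\abs{m(f,F)-m(f,\hat{F})}$ produced by Theorem \ref{thm_decompositiongeneralmeasures} is $\lesssim \norm{f}_{BMO(\mu)}$ uniformly in $F\in\cf$. The median-oscillation piece is immediate: $\omega_\lambda(f;F)\leq r_\lambda(f-\angles{f}_{\hat{F}};F)\leq\frac{1}{\lambda\mu(F)}\int_F\abs{f-\angles{f}_{\hat{F}}}\dmu\leq\frac{1}{\lambda}\norm{f}_{BMO(\mu)}$, where the middle step is Chebyshev (equivalently, Lemma \ref{lemma_weakl1control} together with the trivial bound $\norm{\cdot}_{L^{1,\infty}}\leq\norm{\cdot}_{L^1}$). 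For the median-difference piece, I would apply Lemma \ref{lemma_3rproperty} on $F$ with centering constant $\angles{f}_{\hat{F}}$ to get $\abs{m(f,F)-\angles{f}_{\hat{F}}}\lesssim\norm{f}_{BMO(\mu)}$, apply the same lemma on $\hat{F}$ with centering constant $\angles{f}_{F^{(2)}}$ to get $\abs{m(f,\hat{F})-\angles{f}_{F^{(2)}}}\lesssim\norm{f}_{BMO(\mu)}$, and combine these via the triangle inequality with the trivial bound $\abs{\angles{f}_{\hat{F}}-\angles{f}_{F^{(2)}}}\leq\norm{f}_{BMO(\mu)}$.

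Next I would use sparseness of $\cf$ to obtain exponential integrability of the multiplicity function $N(x):=\sum_{F\in\cf}1_F(x)$. Grouping the cubes by generation in the $\cf$-tree, the total measure of the $n$-th generation is at most $\gamma^n\mu(Q)$, hence $\mu(\{x\in Q:N(x)\geq n\})\leq\gamma^{n-1}\mu(Q)$ and $\int_Q e^{\alpha N}\dmu\leq C_\alpha\mu(Q)$ for every $\alpha<\log(1/\gamma)$. Combining this with the previous step yields $\abs{f-m(f,\hat{Q})}(x)\leq C_0 \norm{f}_{BMO(\mu)} N(x)$ pointwise on $Q$, and choosing $c:=\alpha/C_0$ small enough delivers the exponential estimate with $m(f,\hat{Q})$ in place of $\angles{f}_{\hat{Q}}$. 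To close the argument, the same Lemma \ref{lemma_3rproperty} computation used above also gives $\abs{m(f,\hat{Q})-\angles{f}_{\hat{Q}}}\lesssim\norm{f}_{BMO(\mu)}$, so swapping the median for the mean only multiplies the integrand by a bounded exponential factor and leaves the final constant $C$ finite.

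The main obstacle is the careful bookkeeping of "hats": since the dyadic BMO norm is defined with the parent-cube mean $\angles{f}_{\hat{Q}}$ rather than $\angles{f}_Q$, one has to chain through $\angles{f}_{F^{(2)}}$ whenever $\abs{m(f,\hat{F})-\angles{f}_{\hat{F}}}$ is to be estimated, so that every averaging operation appearing in the bound sits in genuine dyadic-BMO form. Once this bookkeeping is in place, the remaining passage from sparseness to exponential integrability is standard.
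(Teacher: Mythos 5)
Your proposal is correct and follows essentially the same route as the paper: apply Theorem \ref{thm_decompositiongeneralmeasures}, bound each summand by $\norm{f}_{BMO(\mu)}$ via Chebyshev and Lemma \ref{lemma_3rproperty}, use sparseness of the stopping tree to get exponential integrability of the multiplicity function, and swap the median $m(f,\hat Q)$ for the mean $\angles{f}_{\hat Q}$ at the end. The only cosmetic difference is that you estimate the two summand terms $\omega_\lambda(f;F)$ and $\abs{m(f,F)-m(f,\hat F)}$ separately (chaining through $\angles{f}_{\hat F}$ and $\angles{f}_{F^{(2)}}$), whereas the paper first invokes the equivalence \eqref{comparisionofoscillationquantities} and bounds the single quantity $r_\lambda(f-m(f;\hat F);F)$ --- these are equivalent, and your bookkeeping of the ``hats'' is exactly the hidden content of the paper's one-line use of linearity of the median.
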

\begin{proof}[Proof by the dyadic median oscillation decomposition]By using the inequalities
$$
r_\lambda(f;Q)\lesssim_\lambda \frac{1}{\mu(Q)}\int_Q \abs{f} \dmu\quad\text{and} \quad \abs{m(f;Q)}\leq 3 r_\lambda(f;Q),
$$
of which the first follows from Chebyshev's inequality and the second is stated in Lemma \ref{lemma_3rproperty}, and by using the linearity of median, we obtain
$$
r_\lambda(f-m(f;\qhat);Q)\lesssim \norm{f}_{BMO}.
$$
By the median oscillation decomposition (Theorem \ref{thm_decompositiongeneralmeasures}), there exists a sparse collection $\cs$ of dyadic subcubes of $Q$ such that 
$$
\abs{f-m(f,\qhat)}1_Q\lesssim \sum_{S\in\cs} r_\lambda(f-m(f;\hat{S});S) 1_S.
$$
Altogether,
$$ \abs{f-\angles{f}_{\qhat}}1_Q \leq \abs{f-m(f,\qhat)}1_Q+  \abs{m(f;\qhat)-\angles{f}_{\qhat}}1_Q \lesssim \norm{f}_{BMO}\sum_{S\in\cs} 1_{S}.
$$
By sparseness, $\mu(\{\sum_{S\in\cs} 1_{S}=k\})\lesssim 2^{-k}\mu(Q)$, from which the exponential integrability follows by splitting the integration as $\int_{Q}=\sum_{k=0}^\infty \int_{\{\sum_{S\in\cs} 1_{S}=k\}}$.
\end{proof}
The {\it martingale transform} $T$ associated with the (constant) coefficients $\epsilon_Q$ satisfying $\abs{\epsilon_Q}\leq 1$ is defined by 
$$Tf:=\sum_{\substack{Q\in\cd}} \epsilon_Q D_Qf:=\sum_{\substack{Q\in\cd}} \epsilon_Q \big(\sum_{Q'\in\ch_\cd(Q)} \angles{f}_{Q'}1_{Q'}-\angles{f}_Q\big). $$
Lacey \cite[Theorem 2.4]{lacey2015} proves that each martingale transform is pointwise dominated by a positive dyadic operator of zero complexity. Alternative proof for this is as follows: First, use the median oscillation decomposition (Theorem \ref{thm_decompositiongeneralmeasures}) to yield the domination by positive dyadic operators of complexity zero and one. Then, apply the domination for positive dyadic operators (Theorem \ref{thm_domination}) to reduce the complexity to zero.
\begin{proposition}[A pointwise domination theorem for martingale transforms, see Lacey's Theorem 2.4 in \cite{lacey2015} for a stronger version]Let $F_0$ be an initial cube. Assume that $f:\br^d\to\br$ is a locally integrable function that is supported on the cube $F_0$. Then, there exists a sparse collection $\cf$ of dyadic subcubes of $F_0$ such that
$$
\abs{Tf}1_{F_0}\lesssim (\norm{T}_{L^1\to L^{1,\infty}}+1) \Big(\sum_{F\in\cf} \angles{\abs{f}}_F 1_F +\sum_{F\in\cf} \ \angles{\abs{f}}_{\hat{F}}1_F\Big).
$$
\end{proposition}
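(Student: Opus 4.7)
The plan is to apply the median oscillation decomposition (Theorem \ref{thm_decompositiongeneralmeasures}) to the function $Tf$ on the initial cube $F_0$, producing a sparse collection $\cf$ of dyadic subcubes of $F_0$ with
\begin{equation*}
|Tf - m(Tf, \hat{F_0})| 1_{F_0} \lesssim \sum_{F \in \cf}\bigl(\omega_\lambda(Tf; F) + |m(Tf, F) - m(Tf, \hat F)|\bigr) 1_F,
\end{equation*}
and then to bound each of the three contributions --- the two summands and the boundary term $m(Tf,\hat{F_0})$ --- by $\|T\|_{L^1\to L^{1,\infty}}$ times $\angles{|f|}_F$ or $\angles{|f|}_{\hat F}$, summing into the claim.

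The central martingale-transform ingredient is what I will call the \emph{constant-outside identity}: for any dyadic cube $Q$, the function $T(f 1_{Q^c})$ is constant on $Q$, since each summand $\epsilon_R D_R(f 1_{Q^c})$ either vanishes on $Q$ (when $R\subseteq Q$ or $R\cap Q = \emptyset$) or is constant on $Q$ (when $R\supsetneq Q$). With $Q = F$, this gives $\omega_\lambda(Tf;F) = \omega_\lambda(T(f 1_F); F)\leq r_\lambda(T(f 1_F); F)$, and combining Lemma \ref{lemma_weakl1control} with the weak-$L^1$ boundedness of $T$ yields $\omega_\lambda(Tf;F)\lesssim \|T\|_{L^1\to L^{1,\infty}} \angles{|f|}_F$. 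The boundary term is similar: since $f$ is supported on $F_0\subseteq\hat{F_0}$, $Tf = T(f 1_{\hat{F_0}})$ on $\hat{F_0}$, and Lemma \ref{lemma_3rproperty} together with Lemma \ref{lemma_weakl1control} gives $|m(Tf,\hat{F_0})|\lesssim \|T\|_{L^1\to L^{1,\infty}} \angles{|f|}_{F_0}$, which is absorbed into the $F=F_0$ contribution.

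For the median-difference $|m(Tf,F)-m(Tf,\hat F)|$, I would use the constant-outside identity with $Q = \hat F$ to reduce to $|m(T(f 1_{\hat F}), F) - m(T(f 1_{\hat F}), \hat F)|$, bound the second median by $\|T\| \angles{|f|}_{\hat F}$ directly via Lemma \ref{lemma_3rproperty} and weak-$L^1$ on $\hat F$, and for the first median further split $T(f 1_{\hat F}) = T(f 1_F) + T(f 1_{\hat F\setminus F})$. The first summand is controlled on $F$ by $\|T\|\angles{|f|}_F$, while the second is constant on $F$ and should contribute $\|T\|\angles{|f|}_{\hat F}$; adding these contributions and summing over $\cf$ yields the claimed pointwise domination. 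This produces an intermediate bound by positive dyadic operators of complexity zero and one, as the note's remark suggests; Theorem \ref{thm_domination} is not needed for the statement as given, but could be invoked afterwards to reduce the complexity-one term to a complexity-zero sparse operator.

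The hardest step will be the last bound: the naive weak-$L^1$ estimate for the constant value of $T(f 1_{\hat F\setminus F})$ on $F$ only gives $\|T\|\cdot \mu(\hat F)/\mu(F)\cdot \angles{|f|}_{\hat F}$, with a measure-ratio factor that is not uniformly bounded when $\mu$ is non-doubling. Removing this factor will require a more delicate argument, for instance by directly comparing $m(T(f 1_{\hat F}), F)$ with $m(T(f 1_{\hat F}),\hat F)$ through the common weak-$L^1$ distribution on $\hat F$ (noting that, as medians of the same function on nested cubes, they cannot disagree by too much), or alternatively by exploiting the $L^2$-isometry of the martingale transform together with the sparseness of $\cf$.
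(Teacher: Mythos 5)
Your overall strategy --- apply Theorem \ref{thm_decompositiongeneralmeasures} to $Tf$ on $F_0$, bound the two oscillation summands plus the boundary term $m(Tf,\hat{F_0})$ separately --- is the same as the paper's, and your estimates for $\omega_\lambda(Tf;F)$ and the boundary term are correct (the paper packages $\omega_\lambda(Tf;F)+|m(Tf,F)-m(Tf,\hat F)|$ into the single quantity $r_\lambda(Tf-m(Tf;\hat F);F)$ via \eqref{comparisionofoscillationquantities} and estimates it in Lemma \ref{lemma_oscillationestimate}, but that is cosmetic).

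The gap you flag at the end is real, and neither of your two proposed workarounds closes it. With your decomposition the leftover constant is $e=(T(f1_{\hat F\setminus F}))|_F$, and a weak-$L^1$ bound for $e$ genuinely carries the factor $\mu(\hat F)/\mu(F)$: for a function $g$ equal to a large constant $M$ on $F$ and $0$ on $\hat F\setminus F$, one has $\|g\|_{L^{1,\infty}}\approx M\mu(F)$ while $|m(g;F)-m(g;\hat F)|=M$, so there is no distributional shortcut that removes the measure ratio, and an $L^2$ route runs into the same ratio. The paper instead splits the \emph{operator}, not the input: it writes $1_F Tf = 1_F\sum_{Q\subseteq F}\epsilon_Q D_Q f + c_F 1_F$ with $c_F := \bigl(\sum_{Q\supsetneq F}\epsilon_Q D_Q f\bigr)_{x\in F}$, a constant on $F$. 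The payoff is that the two tails telescope: $c_F - c_{\hat F} = \bigl(\epsilon_{\hat F} D_{\hat F} f\bigr)_{x\in F} = \epsilon_{\hat F}\bigl(\angles{f}_F - \angles{f}_{\hat F}\bigr)$, a single martingale increment bounded by $\angles{|f|}_F + \angles{|f|}_{\hat F}$ with \emph{no} operator norm and \emph{no} measure ratio. After subtracting $c_F$ and $c_{\hat F}$ one is left with $\sum_{Q\subseteq F}\epsilon_Q D_Q f$ on $F$ and $\sum_{Q\subseteq\hat F}\epsilon_Q D_Q f$ on $\hat F$, which are localized martingale transforms of $f1_F$ and $f1_{\hat F}$ respectively and are handled exactly as in your $\omega_\lambda$ and boundary-term bounds via Lemmas \ref{lemma_3rproperty} and \ref{lemma_weakl1control}. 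Replacing your ``constant-outside identity'' (localizing the input) by this operator-tail identity is the missing ingredient; once it is in place your argument goes through.
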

\begin{proof}[Proof by the median oscillation decomposition]
The theorem follows from the median oscillation decomposition (Theorem \ref{thm_decompositiongeneralmeasures}) together with an estimate for the oscillation quantities (Lemma \ref{lemma_oscillationestimate}).
\end{proof}
\begin{lemma}[Oscillations of a martingale transform]\label{lemma_oscillationestimate}Let $T$ be a martingale transform. Let $R$ be a dyadic cube. Then
$$
r_\lambda(Tf-m(Tf;\hat{R});R)\leq (\norm{T}_{ L^1\to L^{1,\infty} } +1)(\angles{\abs{f}}_{R}+\angles{\abs{f}}_{\hat{R}}),
$$
and
$$
m(Tf;R)\lesssim \norm{T}_{L^1\to L^{1,\infty}} \frac{1}{\mu(Q)}\int_{\br^d} \abs{f} \dmu
$$
\end{lemma}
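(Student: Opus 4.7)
The plan is to exploit a structural decomposition of $f$ adapted to $R$ and $\hat R$. Write $f=f_R+f_\partial+f_\infty$ with $f_R:=f\cdot 1_R$, $f_\partial:=f\cdot 1_{\hat R\setminus R}$, $f_\infty:=f\cdot 1_{\hat R^c}$. The crucial structural observation is that, by the form of the martingale differences $D_Q$, only ancestor cubes of $\hat R$ contribute to $T f_\infty$ at points of $\hat R$, and each such contribution is constant on $\hat R$; similarly only cubes containing $\hat R$ contribute to $T f_\partial$ at points of $R$. Hence $T f_\infty$ is constant on $\hat R$ (say, with value $a$), $T f_\partial$ is constant on $R$ (say, $b$), and $T f_R$ is constant on $\hat R\setminus R$. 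On $R$ this yields
\[
Tf - m(Tf;\hat R) \;=\; T f_R + (b - m_u), \qquad m_u := m(T f_{\hat R};\hat R),
\]
using the linearity of median under constant shifts.

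For the first inequality of the lemma, the pointwise triangle inequality and the definition of $r_\lambda$ give
\[
r_\lambda\bigl(Tf - m(Tf;\hat R);R\bigr) \;\le\; r_\lambda(T f_R;R) + |b - m_u|.
\]
The first summand is bounded by $\|T\|_{L^1\to L^{1,\infty}}\langle|f|\rangle_R$ via Lemma \ref{lemma_weakl1control} combined with the weak-$(1,1)$ estimate $\|T f_R\|_{L^{1,\infty}}\le\|T\|_{L^1\to L^{1,\infty}}\mu(R)\langle|f|\rangle_R$. For the second, I would apply Lemma \ref{lemma_3rproperty} with centre $c=b$ on $\hat R$ to obtain $|b-m_u|\le 3\,r_\lambda(T f_{\hat R} - b;\hat R)$. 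The function $T f_{\hat R} - b$ equals $T f_R$ on $R$ (the constant $b$ cancels against $T f_\partial|_R = b$), and has an explicit structural form on $\hat R\setminus R$; splitting the defining level set $\hat R\cap\{|T f_{\hat R}-b|>r\}$ into its $R$ and $\hat R\setminus R$ pieces and invoking the weak-$(1,1)$ bound of $T$ on $f_{\hat R}$ at the scale of $\hat R$ produces the required estimate by $(\|T\|_{L^1\to L^{1,\infty}}+1)(\langle|f|\rangle_R+\langle|f|\rangle_{\hat R})$.

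For the second inequality, on $|m(Tf;R)|$, the same structural reasoning shows that $T(f\cdot 1_{R^c})$ is constant $c_0$ on $R$, and the weak-$(1,1)$ bound yields $|c_0|\mu(R)\le\|T(f\cdot 1_{R^c})\|_{L^{1,\infty}}\le\|T\|_{L^1\to L^{1,\infty}}\|f\|_{L^1}$. Lemma \ref{lemma_3rproperty} with centre $c_0$ then gives $|m(Tf;R)-c_0|\le 3\,r_\lambda(Tf-c_0;R)=3\,r_\lambda(T f_R;R)\lesssim\|T\|_{L^1\to L^{1,\infty}}\langle|f|\rangle_R$, and combining the two bounds yields $|m(Tf;R)|\lesssim\|T\|_{L^1\to L^{1,\infty}}\|f\|_{L^1}/\mu(R)$.

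The main obstacle is the bound on $|b - m_u|$ without the factor $\mu(\hat R)/\mu(R)$ that any naive estimate of the isolated constant $b$, via $|b|\mu(R)\le\|T f_\partial\|_{L^{1,\infty}}$, would introduce; this factor is unbounded for non-doubling measures. The resolution is that only the \emph{difference} $b - m_u$ needs to be controlled, and by working with $T f_{\hat R} - b$ on $\hat R$ the dangerous constant $b$ is absorbed simultaneously into the shift and into $T f_\partial|_R$, so that the weak-$(1,1)$ bound for $T$ applied to $f_{\hat R}$ at the scale of $\hat R$ is precisely the right tool.
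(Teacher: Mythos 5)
Your decomposition $f=f_R+f_\partial+f_\infty$ and the resulting identity on $R$,
\[
Tf - m(Tf;\hat R) = T f_R + (b - m_u),\qquad b:=Tf_\partial|_R,\ m_u:=m(Tf_{\hat R};\hat R),
\]
are correct and match the paper's starting point exactly (your $b$ is the paper's $c_R-c_{\hat R}$, your $m_u$ is the paper's $m(T(1_{\hat R}f);\hat R)$), and your treatment of the second inequality and of $r_\lambda(Tf_R;R)$ is fine. The gap is in how you handle $|b-m_u|$. You correctly identify the danger of estimating $|b|$ by the weak-$(1,1)$ bound on $Tf_\partial$ at the scale of $R$ (it introduces the non-doubling factor $\mu(\hat R)/\mu(R)$), but the workaround you propose does not in fact avoid it. Applying Lemma~\ref{lemma_3rproperty} with centre $b$ gives $|b-m_u|\le 3\,r_\lambda(Tf_{\hat R}-b;\hat R)$, and when you split the level set $\hat R\cap\{|Tf_{\hat R}-b|>r\}$ into the $R$ and $\hat R\setminus R$ parts, the cancellation $Tf_{\hat R}-b=Tf_R$ occurs only on $R$. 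On $\hat R\setminus R$ the function is $Tf_R+Tf_\partial-b$, and any weak-$(1,1)$ estimate of its level set (whether via $Tf_{\hat R}$ or $Tf_\partial$) still requires an a priori bound on the constant shift $|b|$ (or $|d-b|$ with $d$ the constant value of $Tf_R$ on $\hat R\setminus R$). So the argument is circular: it returns exactly the quantity you set out to avoid.

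The missing idea is that $b$ needs no weak-$(1,1)$ bound at all, because it is a single martingale difference. Since $T(1_{\hat R^c}f)$ is constant on $\hat R$, one has
\[
b=c_R-c_{\hat R}=\Big(\sum_{Q\supseteq\hat R}\epsilon_Q D_Qf\Big)\Big|_R-\Big(\sum_{Q\supseteq\hat{\hat R}}\epsilon_Q D_Qf\Big)\Big|_R=\epsilon_{\hat R}\,D_{\hat R}f\big|_R=\epsilon_{\hat R}\big(\langle f\rangle_R-\langle f\rangle_{\hat R}\big),
\]
so $|b|\le\langle|f|\rangle_R+\langle|f|\rangle_{\hat R}$ trivially (no operator norm, no doubling). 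With this in hand, one simply uses $|b-m_u|\le|b|+|m_u|$ and bounds $|m_u|\le 3r_\lambda(Tf_{\hat R};\hat R)\lesssim_\lambda\|T\|_{L^1\to L^{1,\infty}}\langle|f|\rangle_{\hat R}$ via Lemmas~\ref{lemma_3rproperty} and~\ref{lemma_weakl1control}, which is exactly the paper's route and avoids the centre-$b$ application of Lemma~\ref{lemma_3rproperty} altogether.
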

\begin{proof}
Let $R$ be a dyadic cube. We split
$
1_RTf=1_RT(1_Rf)+1_RT(1_{R^c} f).
$ We observe that $1_RT(1_{R^c} f)=1_R \sum_{Q:Q \supseteq \hat{R}} \epsilon_Q D_Qf$ is constant on $R$, and denote this constant value by $c_R:= \Big( \sum_{Q\supseteq \hat{R}} \epsilon_Q D_Qf\Big)_{x\in R}$. By using the linearity of median, we write
\begin{equation*}
\begin{split}
1_R T(f)-1_R m(Tf;\hat{R})&= 1_R T(1_Rf) +1_R(c_R -c_{\hat{R}}) - 1_R m(T(1_{\hat{R}}f);\hat{R})\\
&= 1_R T(1_Rf) +1_R\epsilon_{\hat{R}} D_{\hat{R}}f - 1_R m(T(1_{\hat{R}}f);\hat{R})\\
&= 1_R T(1_Rf) +1_R \epsilon_{\hat{R}} \angles{f}_{R}-1_R \epsilon_{\hat{R}} \angles{f}_{\hat{R}} +m(T(1_{\hat{R}}f);\hat{R}).
\end{split}
\end{equation*}
Therefore,
$$
r_\lambda(Tf-m(Tf;\hat{R});R)\leq r_\lambda(T(1_Rf);R)+ \abs{m(T(1_{\hat{R}}f);\hat{R})}+ \angles{\abs{f}}_{R}+\angles{\abs{f}}_{\hat{R}}.
$$
By using the estimate $\abs{m(f;Q)}\leq 3r_\lambda(f;Q)$ (Lemma \ref{lemma_3rproperty}), and by dominating the median oscillation $r_\lambda(f;Q)$ by the weak $L^1$ estimate (Lemma \ref{lemma_weakl1control}), we obtain
\begin{equation*}
\begin{split}
r_\lambda(Tf-m(Tf;\hat{R});R)&\lesssim_\lambda \frac{\norm{T(1_Rf)}_{L^{1,\infty}}}{\mu(R)}+ \frac{\norm{T(1_{\hat{R}}f)}_{L^{1,\infty}}}{\mu(\hat{R})}+\angles{\abs{f}}_{R}+\angles{\abs{f}}_{\hat{R}}\\
&\leq  (\norm{T}_{L^1\to L^{1,\infty}} +1)(\angles{\abs{f}}_{R}+\angles{\abs{f}}_{\hat{R}}).
\qedhere\end{split}
\end{equation*}
\end{proof}

\subsection{Median oscillation decomposition adapted to the RBMO?}\label{discussion_bmo}
In the light of the analogue between median and mean, and median oscillation and mean oscillation,
$$
m(f;Q)\leftrightarrow \angles{f}_Q,\quad r_\lambda(f-c,Q)\leftrightarrow \frac{1}{\mu(Q)}\int_Q \abs{f-c}\dmu,$$
the passage from the usual BMO norm to the dyadic (martingale) BMO norm,
$$
\frac{1}{\mu(Q)}\int_Q \abs{f-\angles{f}_Q} \rightarrow \frac{1}{\mu(Q)}\int_Q \abs{f-\angles{f}_Q}\dmu+ \abs{\angles{f}_Q-\angles{f}_{\qhat}},
$$
is analogous to the passage
$$
r_\lambda(f-m(f,Q);Q)\rightarrow r_\lambda(f-m(f,Q);Q)+\abs{m(f;Q)-m(f;\qhat)},
$$
which we use to extend Lerner's local oscillation decomposition. Thus, our extension can be viewed as a local oscillation decomposition adapted to the dyadic (martingale) BMO. 

The author believes that, in the same spirit, Lerner's  local oscillation decomposition can be adapted to the RBMO space,  and that this adapted decomposition can be used to pointwise dominate non-homogeneous Calder\'on--Zygmund operators  by suitable positive averaging operators. (For the RBMO space, see \cite{tolsa2001}, and, for non-homogeneous Calder\'on--Zygmund operators, see \cite{nazarov2003}.)

We remark that a pointwise domination for non-homogeneous Calder\'on--Zygmund operators by positive averaging operators was obtained by Treil and Volberg, by adapting Lacey's technique \cite[Proof of Theorem 5.2]{lacey2015}. This result is announced by Lacey \cite[Section 6]{lacey2015}.

\bibliographystyle{plain}
\bibliography{remark_bibliography}
\end{document}